\documentclass[10pt]{amsart}
\usepackage{amsmath}
\usepackage{amsfonts}
\usepackage{amsthm}
\usepackage{enumerate}
\usepackage{amssymb}
\usepackage{amscd}

\DeclareMathOperator{\Tor}{Tor}

\DeclareMathOperator{\Ext}{Ext}

\DeclareMathOperator{\im}{Im}

\begin{document}
\theoremstyle{definition}
\newtheorem*{defn}{Definition}
\theoremstyle{remark}
\newtheorem*{rmk}{Remark}
\newtheorem*{rmks}{Remarks}
\theoremstyle{plain}
\newtheorem*{lem}{Lemma}
\newtheorem*{prop}{Proposition}
\newtheorem*{thm}{Theorem}
\newtheorem*{example}{Example}
\newtheorem*{cor}{Corollary}
\newtheorem*{conj}{Conjecture}
\newtheorem*{hyp}{Hypothesis}
\newtheorem*{thrm}{Theorem}
\newtheorem*{quest}{Question}
\theoremstyle{remark}
\newcommand{\Fp}{\mathbb{F}_p}
\newcommand{\Oa}{\mathcal{O}(\alpha)}
\newcommand{\Zp}{\mathbb{Z}_p}
\newcommand{\Qp}{\mathbb{Q}_p}
\newcommand{\GK}{\mathrm{GKdim}}
\newcommand{\invlim}{\lim\limits_{\longleftarrow}}
\newcommand{\injdim}{\mathrm{inj.dim}}
\newcommand{\lmod}[1]{\mathrm{Pmod}(#1)}
\newcommand{\rmod}[1]{\mathrm{Pmod-} #1}

\title{Euler characteristics, Akashi series and compact $p$-adic Lie groups}
\author{Simon Wadsley}
\address{Homerton College, Cambridge, CB2 8PQ}
\maketitle

\section{Introduction}
In this paper following \cite{Coa1999,CoaSuj1999} %, CoaHow2001, Tot1999, CoaSuj1999, CoaSujWin2001} 
we are interested in computing an Euler characteristic for finitely generated modules over Iwasawa algebras. Suppose that $G$ is a compact $p$-adic Lie group without $p$-torsion, and $M$ is a finitely generated $\Zp G$-module. If the homology groups $H_i(G,M)$ are finite for all $i$ then we say that the Euler characteristic is well-defined and takes the value \[ \chi(G,M):=\prod_{i\geqslant 0} |H_i(G,M)|^{(-1)^n}. \] 

The following is result is classical:

\begin{thm} Suppose that $G\cong\Zp^d$.
\begin{enumerate} \item The Euler characteristic of a finitely generated $\Zp G$-module is well-defined if and only if $H_0(G,M)$ is finite.
\item Moreover, $\chi(G,M)=1$ whenever $M$ is a pseudo-null module with well-defined Euler characteristic. \end{enumerate}
\end{thm}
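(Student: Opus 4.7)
Fix topological generators $\gamma_1,\dots,\gamma_d$ of $G$ and identify $\Zp G$ with $\Lambda := \Zp[[T_1,\dots,T_d]]$ via $T_i = \gamma_i - 1$, so that the Koszul complex on $(T_1,\dots,T_d)$ provides a finite free resolution of $\Zp$ over $\Lambda$. Each $H_i(G,M) = \Tor_i^\Lambda(\Zp,M)$ is then the homology of a bounded complex of finitely generated free $\Zp$-modules, hence is itself a finitely generated $\Zp$-module. Such a module is finite if and only if it vanishes after tensoring with $\Qp$, a fact I will repeatedly exploit.

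For part (1), the ``only if'' direction is trivial. For the converse, put $R = \Lambda \otimes_{\Zp} \Qp$ and $\mathfrak{n} = (T_1,\dots,T_d) R$; this $\mathfrak{n}$ is a maximal ideal of $R$ with residue field $\Qp$. Flat base change identifies $H_i(G,M) \otimes \Qp$ with $\Tor_i^R(\Qp, M \otimes \Qp)$, and these Tor modules are annihilated by $\mathfrak{n}$ and so supported only there. Finiteness of $H_0(G,M)$ translates into $(M \otimes \Qp)/\mathfrak{n}(M \otimes \Qp) = 0$; Nakayama applied to the Noetherian local ring $R_\mathfrak{n}$ then forces $(M \otimes \Qp)_\mathfrak{n} = 0$, so every $\Tor_i^R(\Qp, M \otimes \Qp)$ vanishes and every $H_i(G,M)$ is finite.

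For part (2), I induct on $d$. The case $d = 1$ is immediate: a pseudo-null $\Zp[[T]]$-module is of finite length (and so finite), and $|\ker f| = |\coker f|$ for any endomorphism $f$ of a finite abelian group gives $\chi(G,M) = |M_G|/|M^G| = 1$. For $d \geq 2$, I first dispose of $M$ finite via the Koszul count $\chi(G,M) = |M|^{\sum_i (-1)^i \binom{d}{i}} = 1$; then, using multiplicativity of $\chi$ on $0 \to M[\mathfrak{m}^\infty] \to M \to M/M[\mathfrak{m}^\infty] \to 0$ (with $\mathfrak{m}$ the maximal ideal of $\Lambda$, and $M[\mathfrak{m}^\infty]$ finite since it is finitely generated and annihilated by some $\mathfrak{m}^n$, with $\Lambda/\mathfrak{m}^n$ a finite ring), I reduce to the case that $M$ has no $\mathfrak{m}$-associated prime. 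Write $G = H \times G'$ with $H = \langle \gamma_d \rangle$. The Hochschild-Serre spectral sequence collapses to two rows, giving $\chi(G,M) = \chi(G', M_H)/\chi(G', M^H)$, and both factors are defined by part (1): $(M_H)_{G'} = M_G$ is finite, and $(M^H)_{G'}$ is an extension of the subgroup $E^\infty_{0,1}$ of the finite $H_1(G,M)$ by the image of $d_2 \colon H_2(G', M_H) \to (M^H)_{G'}$, itself finite by the inductive case of part (1) for $M_H$.

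The main obstacle is to show $\chi(G', M_H) = \chi(G', M^H) = 1$, and my plan is to choose $\gamma_d$ by prime avoidance so that $T_d$ is a non-zero-divisor on $M$, which yields $M^H = 0$. Each associated prime of $M$ has height $\geq 2$; by the reduction no such prime is $\mathfrak{m}$, and none is $I_G := (T_1,\dots,T_d)$ either (else $\Lambda/I_G = \Zp$ would embed into $M$, making $(M \otimes \Qp)_\mathfrak{n} \neq 0$ and hence $M_G$ infinite). Since the only primes of $\Lambda$ containing $I_G$ are $I_G$ and $\mathfrak{m}$, the associated primes of $M$ do not cover $I_G$, and prime avoidance (available because $\Zp$ is uncountable) produces $\gamma_d \in G \setminus pG$ with $T_d = \gamma_d - 1$ outside every associated prime of $M$. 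Then $M^H = 0$, and $\Ann_{\Zp G'}(M_H)$ has height $\geq 2$ (adjoining the regular element $T_d$ to $\Ann_\Lambda M$ raises height by one, and then quotienting $\Lambda$ by $T_d$ lowers it by one), so $M_H$ is pseudo-null over $\Zp G'$. The inductive hypothesis applied to $M_H$ then gives $\chi(G', M_H) = 1$, whence $\chi(G,M) = 1$.
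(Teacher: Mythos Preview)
Your argument for part (1) is correct and is the commutative specialisation of the paper's general proof: both localise at the augmentation ideal and invoke Nakayama, the only cosmetic difference being that you invert $p$ first and then localise at $\mathfrak{n}$, whereas the paper localises in a single step at the Ore set $S=\Zp G\setminus I_{G,G}$; for $G=\Zp^d$ these produce the same local ring.

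Part (2) contains a genuine gap at the prime-avoidance step. After your reduction you need some $\gamma\in G\setminus pG$ with $\gamma-1$ outside every associated prime $\mathfrak{p}_i$ of $M$. But each set $K_i=\{\gamma\in G:\gamma-1\in\mathfrak{p}_i\}$ is a proper closed \emph{subgroup} of $G$, and for $d\geq 2$ the group $\Zp^d$ \emph{can} be covered by finitely many proper closed subgroups (e.g.\ by its index-$p$ subgroups). Concretely, for $p=2$ and $d=2$ take $\mathfrak{p}_1=(T_1,\,2+T_2)$, $\mathfrak{p}_2=(2+T_1,\,T_2)$, $\mathfrak{p}_3=(2+T_1,\,2+T_2)$ and $M=\bigoplus_i\Lambda/\mathfrak{p}_i$. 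This $M$ is pseudo-null with finite $H_0(G,M)$ and no $\mathfrak{m}$-associated prime, yet modulo $\mathfrak{p}_1,\mathfrak{p}_2,\mathfrak{p}_3$ one has $\gamma_1^a\gamma_2^b-1\equiv(-1)^b-1,\ (-1)^a-1,\ (-1)^{a+b}-1$ respectively, and for any $a,b\in\Zp$ at least one of $a,b,a+b$ is even; so every $\gamma-1$ is a zero-divisor on $M$. The uncountability of $\Zp$ is irrelevant --- the obstruction already lives in the finite quotient $G/pG\cong\mathbb{F}_p^d$.

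The gap is repairable within your framework. One fix: choose instead any $x\in I_G\setminus\mathfrak{m}^2$ avoiding the $\mathfrak{p}_i$ by ordinary prime avoidance (one non-prime ideal $\mathfrak{m}^2$ is permitted); then $x$ is part of a regular system of parameters contained in $I_G$, so $\Lambda/(x)$ is again a power-series ring over $\Zp$ in $d-1$ variables, and your change-of-rings argument goes through verbatim with $M/xM$ in place of $M_H$. Another fix: filter $M$ to reduce to the case of a single associated prime, where one proper closed subgroup cannot contain $G\setminus pG$. The paper's own route (in the broader nilpotent setting) avoids the issue by a different reduction: it passes to a \emph{critical} module and disposes of $M^Z$ directly via a lemma showing that a module on which a central $\Zp$ acts trivially has trivial Akashi series; one may then assume $M^Z=0$ without having to choose $Z$ cleverly.
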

Both parts of this theorem are known not to be true for general compact $p$-adic Lie groups $G$ without $p$-torsion, see \cite{CoaSchSuj2003/2}. 

In this paper we prove that (1) holds if and only if $G$ is finite-by-nilpotent. Both directions depend on the key fact that the set $S:=\Zp G\backslash \ker(\Zp G\rightarrow \Zp)$ is an Ore set in $\Zp G$ precisely if $G$ is finite-by-nilpotent. 

In addition, we prove that part (2) of the theorem holds whenever $G$ is finite-by-nilpotent. To prove this we prove analogous results for the Akashi series of \cite{CoaSchSuj2003/2} and \cite{CFKSV} (see (\ref{Akashi}) for the definition). In particular we show that if $G\cong H\rtimes\Gamma$ then every finitely generated $\Zp G$-module $M$ such that $H_0(H,M)$ is $\Zp\Gamma$-torsion has well-defined Akashi series if and only if $H$ is finite-by-nilpotent and that if $G$ is finite-by-nilpotent then such a module that is also pseudo-null has trivial Akashi series. This time the first part depends on the fact that $T:=\Zp G\backslash \ker(\Zp G\rightarrow \Zp\Gamma)$ is an Ore set in $\Zp G$ precisely if $H$ is finite-by-nilpotent. We prove the second part using the Hochschild-Serre spectral sequence. We could prove part (2) of the Euler characteristics result directly using the same techniques but we do not because it follows immediately from the Akashi series result.

Whenever $M$ is a finitely generated and torsion for a subset of $S$ (resp. $T$) that is an Ore set of $\Zp G$ the Euler characteristic (resp. Akashi series) of $M$ is well-defined. This raises the interesting algebraic question of what the maximal Ore subsets of $S$ and $T$ are for general compact $p$-adic Lie groups $G$ (with $G\cong H\rtimes\Gamma$ for the $T$ case). Since the long exact sequence of $K$-theory used to formulate the main conjecture in \cite{CFKSV} is defined for any such Ore set and the connecting map will remain surjective whenever $G$ is pro-$p$ this question may well also have arithmetic implications as the $\mathcal{M}_H(G)$-conjecture (\cite[Conjecture 5.1]{CFKSV}) could then be weakened without losing the means of defining of a characteristic element that should be related to a $p$-adic $\mathrm{L}$-function.
 
Following work of Serre \cite{Ser1998} in the case $M$ is finite, Ardakov and the author \cite{ArdWad2008} have described the Euler characteristic of any finitely generated $p$-torsion module in terms of a notion of graded Brauer character for $M$ that is supported on the $p$-regular elements of $G$.  It followed from this description that if $d_G(M)<\dim C_G(g)$ for every $p$-regular element of $G$ then $\chi(G,M)$ must be $1$. Here $d_G(M)$ denotes the canonical dimension of $M$ as defined in section \ref{can} and $C_G(g)$ denotes the centraliser of $g$ in $G$.

Totaro had already extended Serre's work in a different direction. Instead of concentrating on $p$-torsion modules he computed Euler characteristics of modules that are finitely generated as $\Zp$-modules. Part of his main result was the following:

\begin{thm}[\cite{Tot1999}, Theorem 0.1]  Let $p$ be any prime number. Let G be a compact $p$-adic Lie group of dimension at least 2, and let $M$ be a finitely generated $\Zp$-module with $G$-action. Suppose that the homology of the Lie algebra $\mathfrak{g}_{\Qp}$ of $G$ acting on $M\otimes\Qp$ is 0; this is equivalent to assuming that the homology of any sufficiently small open subgroup $G_0$ acting on $M$ is finite, so that the Euler characteristic $\chi(G_0, M)$ is defined. Then the Euler characteristics $\chi(G_0, M)$ are the same for all sufficiently small open subgroups $G_0$ of $G$. 

The common value of these Euler characteristics is $1$ if every element of the Lie algebra $\mathfrak{g}_{\Qp}$ has centraliser of dimension at least 2. Otherwise, there is an element of $\mathfrak{g}_{\Qp}$ whose centraliser has dimension $1$, and then the common value is not $1$ for some choice of module $M$. 
\end{thm}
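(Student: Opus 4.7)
I would organise the proof into four parts: (i) the equivalence of the two forms of the hypothesis, (ii) the independence of $\chi(G_0, M)$ from the choice of small $G_0$ (which will make crucial use of $\dim G \geq 2$), (iii) the vanishing of the common value under the big-centraliser condition, and (iv) a non-vanishing construction when some centraliser is one-dimensional.

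Part (i) I would deduce from Lazard's comparison $H^*(G_0, M \otimes \Qp) \cong H^*(\mathfrak{g}_{\Qp}, M \otimes \Qp)$ for sufficiently small uniform $G_0$: each $H_i(G_0, M)$ is finitely generated over $\Zp$ and vanishes for $i > \dim G$, hence is finite if and only if it is trivial after $- \otimes \Qp$. For (ii) I would first note that the assumption $\dim G \geq 2$ is indispensable: in dimension $1$, already $\Zp$ acting on $\Zp$ by multiplication by $1+p$ gives $\chi(\Zp, \Zp) = p$ while $\chi(p\Zp, \Zp) = p^2$ despite the Lie-algebra cohomology vanishing. The idea is then to express $\log_p \chi(G_0, M)$ as a Lie-algebra invariant of $(\mathfrak{g}_{\Qp}, M \otimes \Qp)$ corrected by the integral lattice data of $M$, and to use the commutator structure of $\mathfrak{g}_{\Qp}$ when $\dim \geq 2$ to force any index correction to vanish. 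Concretely I would compare $\chi(G_0, M)$ with $\chi(G_0'', M)$ for a common smaller open normal $G_0'' \trianglelefteq G_0$ via the Lyndon--Hochschild--Serre spectral sequence
\[
E_2^{p,q} = H_p\bigl(G_0/G_0'', H_q(G_0'', M)\bigr) \Longrightarrow H_{p+q}(G_0, M),
\]
combined with dévissage into the $\Zp$-free and $p$-torsion summands of $M$.

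Part (iii) is the substantive obstacle. My plan is to establish a Lefschetz-type identity expressing $\log_p \chi(G_0, M)$ as an alternating trace on the integral Chevalley--Eilenberg complex $\Lambda^* \mathfrak{g}_{\Zp}^\vee \otimes M$, decomposed by adjoint orbits in $\mathfrak{g}_{\Qp}$. The contribution of the orbit of $X$ should be governed by the action of $\mathrm{ad}(X)$ on $\Lambda^* \mathfrak{g}_{\Qp}$, and an elementary exterior-algebra identity shows this contribution vanishes whenever $\dim C_{\mathfrak{g}_{\Qp}}(X) \geq 2$. The technical core lies in making the integral trace formula precise and in controlling the $p$-adic denominators introduced by the non-integrality of orbit decompositions; these denominators interact subtly with the dévissage of (ii) and are, I expect, the main source of difficulty.

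For (iv) I would fix an $X \in \mathfrak{g}_{\Qp}$ with one-dimensional centraliser and a non-trivial continuous character $\psi$ of the closed pro-cyclic subgroup $\overline{\langle \exp X \rangle} \cong \Zp \subseteq G_0$. Since this subgroup has strictly smaller dimension than $G_0$, a naive induction will not yield a $\Zp$-finite module; instead one extracts a $G_0$-stable lattice inside a suitable twist of the adjoint representation on $\mathfrak{g}_{\Zp}$. A Koszul computation over $\overline{\langle \exp X \rangle}$ contributes the local factor $|\psi(\exp X) - 1|_p^{-1}$, and the non-vanishing of the $\mathrm{ad}(X)$-eigenvalues in the transverse directions, which is precisely guaranteed by $\dim C_{\mathfrak{g}_{\Qp}}(X) = 1$, should ensure that the global $\chi(G_0, M)$ is non-trivial for a generic choice of $\psi$.
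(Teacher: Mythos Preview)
This theorem is not proved in the paper at all: it is quoted verbatim from Totaro's paper \cite{Tot1999} as background and motivation in the introduction, and no argument for it is given or sketched here. There is therefore nothing in the present paper to compare your proposal against; the only ``proof'' the paper offers is the citation.

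If you want feedback on the proposal itself relative to Totaro's actual argument, that would require consulting \cite{Tot1999}. I will just note that your plan is at the level of an outline rather than a proof: part~(iii), which you yourself flag as the substantive obstacle, rests on an unproved ``Lefschetz-type identity'' expressing $\log_p\chi(G_0,M)$ as an alternating trace decomposed over adjoint orbits, and you acknowledge that the integral and $p$-adic denominator issues are unresolved. Part~(iv) is similarly schematic: you do not actually specify the module $M$ or carry out the computation showing $\chi(G_0,M)\neq 1$. So even setting aside the comparison question, what you have written is a strategy sketch with the key technical steps still to be supplied, not a proof.
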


Looking at these results together, and recalling that $d_G(M)\leq 1$ for any finitely generated $\Zp$-module, we might be tempted to make the following conjecture:

\begin{conj} If $G$ is a compact $p$-adic Lie group without $p$-torsion and $M$ is a finitely generated $\Zp G$-module with well-defined Euler characteristic such that $d_G(M)<\dim C_G(g)$ for all $g\in G$ then $\chi(G,M)=1$. 
\end{conj}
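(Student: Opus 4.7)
The plan is to reduce to the two cases where the result is already known: the theorem of Ardakov and the author for $p$-torsion modules, and Totaro's theorem for $\Zp$-finitely generated modules. The first step is to invoke the short exact sequence $0 \to M[p^\infty] \to M \to M/M[p^\infty] \to 0$. The submodule $M[p^\infty]$ is finitely generated and killed by some $p^n$, and filtering it by the chain $M[p] \subseteq M[p^2] \subseteq \cdots \subseteq M[p^n]$ with $\Fp G$-quotients, each of canonical dimension at most $d_G(M)$, reduces the analysis to $p$-torsion modules. Since the hypothesis on centralisers of all elements of $G$ is strictly stronger than the one imposed on $p$-regular elements in \cite{ArdWad2008}, one obtains $\chi(G, M[p^\infty]) = 1$. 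Provided the Euler characteristics of the outer terms of this sequence are well-defined — a point that must be checked from the long exact sequence in homology — multiplicativity reduces the problem to the case that $M$ is $p$-torsion free.

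Now suppose $M$ is $p$-torsion free. Then $M/pM$ is finitely generated over $\Fp G$ with $d_G(M/pM) \leq d_G(M) - 1$, and a direct calculation from $0 \to M \xrightarrow{p} M \to M/pM \to 0$ — using that for a finite abelian $p$-group $A$ one has $|A/p| = |A[p]|$ — forces $\chi(G, M/pM) = 1$ automatically whenever $\chi(G, M)$ is defined. This is consistent with the conjecture but does not imply it, and shows that any attempted reduction to the $p$-torsion case via $M/pM$ alone is doomed to lose the very information we are trying to extract. Induction on $d_G(M)$ via this route therefore cannot succeed on its own, and the $p$-torsion free case requires genuinely new input.

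The main obstacle is thus the case where $M$ is $p$-torsion free with $d_G(M) \geq 2$, so that neither the theorem of \cite{ArdWad2008} nor that of \cite{Tot1999} applies directly. Their techniques — graded Brauer characters and Lie-algebra homology respectively — are of a quite different character, and bridging them is the central difficulty. One promising avenue is to work with characteristic elements in $K_1$ of an Ore localisation of $\Zp G$ at a suitable subset of $S = \Zp G \setminus \ker(\Zp G \to \Zp)$, in the spirit of the $\mathcal{M}_H(G)$-conjecture discussed earlier: the Euler characteristic should then be the value of an extended graded Brauer character at the class of $M$, and its triviality would follow from the centraliser hypothesis as in the $p$-torsion case. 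An alternative is to adapt Totaro's argument by approximating $M$ equivariantly by $G$-stable lattices in $M \otimes_{\Zp} \Qp$, but extending his dimension count beyond the $\Zp$-finitely generated setting appears substantially more subtle, since the Lie-algebra homology of $M \otimes_{\Zp} \Qp$ need no longer be finite-dimensional.
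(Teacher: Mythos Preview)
The statement you are attempting to prove is presented in the paper as a \emph{conjecture}, not a theorem; the paper offers no proof, and the question is left open. Your proposal is therefore not comparable to any argument in the paper, and you yourself correctly recognise that it does not constitute a proof.

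Your reduction via $0 \to M[p^\infty] \to M \to M/M[p^\infty] \to 0$ is sound, and the worry you flag about well-definition of the outer Euler characteristics is in fact easily dispatched: since $M[p^\infty]$ is $p$-torsion, every $H_i(G, M[p^\infty])$ is a finitely generated $p$-torsion $\Zp$-module and hence finite, so $\chi(G, M[p^\infty])$ is automatically defined; the long exact sequence then forces $\chi(G, M/M[p^\infty])$ to be defined as well. Combined with the result of \cite{ArdWad2008} (which applies to arbitrary $p$-torsion modules, so the filtration by $M[p^i]$ is not even needed), this genuinely reduces the conjecture to the $p$-torsion-free case.

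Your diagnosis of the $p$-torsion-free case with $d_G(M) \geqslant 2$ is accurate: neither \cite{ArdWad2008} nor \cite{Tot1999} applies, and your observation that $\chi(G, M/pM) = 1$ holds automatically---so that reduction mod $p$ discards exactly the information one wants---is a real obstruction to any naive induction. The two avenues you sketch at the end are speculative and undeveloped; the paper's own discussion is equally speculative, motivating the conjecture from the two known special cases and then immediately proposing a sharper version involving the generalised eigenspaces $\mathfrak{g}^0(g)$. In short, your proposal is a correct analysis of why the problem is hard, not a proof, and the paper makes no stronger claim.
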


In fact our Theorem might indicate that an even stronger conjecture is true. Recall that every compact $p$-adic Lie group has an associated $\Zp$-Lie algebra $\mathfrak{g}$ and each automorphism of $G$ induces an automorphism of $\mathfrak{g}$. In this way for each $g\in G$ conjugation by $g$ induces an element $\theta(g)$ of $GL(\mathfrak{g})$. Let $\mathfrak{g}^0(g)$ denote the generalised eigenspace $\theta(g)$ for the eigenvalue $1$. That is \[ \mathfrak{g}^0(g):=\{x\in\mathfrak{g}|(\theta(g)-1)^n=0\mbox{ for some }n>0\}. \]

\begin{conj} If $G$ is a compact $p$-adic Lie group without $p$-torsion and $M$ is a finitely generated $\Zp G$-module with well-defined Euler characteristic such that $d_G(M)<\dim\mathfrak{g}^0(g)$ for each $g\in G$ then $\chi(G,M)=1$.
\end{conj}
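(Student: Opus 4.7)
The plan is to reduce the conjecture to two special cases by standard dévissage and handle each using tools already cited in the paper. Since the Euler characteristic is multiplicative on short exact sequences and canonical dimension does not increase under subquotients, it suffices (modulo verifying well-definedness on each piece) to treat separately (i) the case $pM=0$ and (ii) the case where $M$ is $\Zp$-torsion-free, by filtering $M$ according to its $p$-power torsion.

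In case (i), $M$ is an $\Fp G$-module, and the main theorem of \cite{ArdWad2008} gives $\chi(G,M)=1$ whenever $d_G(M)<\dim C_G(g)$ for every $p$-regular $g$. To deduce this from our hypothesis it suffices to observe, for each such $g$, that $\mathfrak{g}^0(g)=\mathrm{Lie}(C_G(g))$: since $g$ has finite order coprime to $p$, the induced automorphism $\theta(g)$ of $\mathfrak{g}$ has finite order prime to $p$ and is therefore semisimple after extending scalars to $\overline{\Qp}$, so its generalised $1$-eigenspace collapses to the ordinary $1$-eigenspace $\ker(\theta(g)-1)=\mathrm{Lie}(C_G(g))$.

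In case (ii), the task is to upgrade Totaro's theorem, which applies to $M$ finitely generated as a $\Zp$-module, to arbitrary $\Zp$-torsion-free finitely generated $\Zp G$-modules. I would proceed by induction on $d_G(M)$. For the inductive step, after replacing $G$ by a uniform pro-$p$ open subgroup, I would seek a closed normal subgroup $N$ whose Lie algebra $\mathfrak{n}$ is stable under every $\theta(g)$ and interacts compatibly with the generalised-eigenspace decompositions, and then use the Hochschild--Serre spectral sequence
\[ H_p(G/N, H_q(N, M)) \Rightarrow H_{p+q}(G, M) \]
together with an equivariant Euler characteristic in the spirit of \cite{Tot1999} to push the computation down to $G/N$ acting on virtual modules of strictly smaller canonical dimension.

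The principal obstacle will be controlling the generalised-eigenspace hypothesis under this reduction: one must verify, for every $\bar g\in G/N$ and for each relevant $H_q(N,M)$, that the inequality passes to $G/N$ and $(\mathfrak{g}/\mathfrak{n})^0(\bar g)$. This requires understanding the Jordan decomposition of $\theta(g)$ for non-torsion $g$ and choosing $N$ adapted to the entire family $\{\mathfrak{g}^0(g):g\in G\}$ simultaneously. A more direct alternative, closer in spirit to Totaro's original argument, would be to bypass Hochschild--Serre altogether and work with a Koszul complex computing Lie algebra homology $H_*(\mathfrak{g}, M\otimes\Qp)$, using the generalised-eigenspace condition as input to a spectral vanishing criterion.
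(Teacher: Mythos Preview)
The statement you are attempting to prove is labelled a \emph{Conjecture} in the paper, not a theorem; the paper offers no proof of it. What the paper \emph{does} prove is the special case where $G$ is finite-by-nilpotent (so that $\mathfrak{g}^0(g)=\mathfrak{g}$ for every $g$ and the hypothesis reduces to $M$ being pseudo-null), together with some partial converses constraining which $G$ could satisfy the conclusion. So there is no ``paper's own proof'' to compare your proposal against.

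On the merits: your d\'evissage into a $p$-torsion part and a $\Zp$-torsion-free part is sound, and well-definedness does propagate here (the $p$-power torsion submodule and all its graded pieces automatically have finite homology, and the long exact sequence then forces the quotient to as well). Your case (i) is essentially correct and is exactly the content of \cite{ArdWad2008} combined with the observation, already made in the paper, that $\mathfrak{g}^0(g)=C_\mathfrak{g}(g)$ for $g$ of finite order.

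Case (ii), however, is not a proof but an outline with gaps you yourself flag, and those gaps are the genuine obstruction --- this is precisely why the statement is conjectural. Two concrete problems: first, even the base case $d_G(M)\le 1$ does not follow directly from Totaro's theorem as stated, because Totaro's criterion is that every element of the \emph{Lie algebra} $\mathfrak{g}_{\Qp}$ has centraliser of dimension $\ge 2$, whereas your hypothesis concerns $\mathfrak{g}^0(g)$ for group elements $g$; relating the two for $g$ of infinite order is nontrivial, and moreover Totaro's conclusion is only for sufficiently small open subgroups $G_0$, not for $G$ itself. Second, your inductive step is entirely schematic: there is no indication of how to choose $N$ so that the inequality $d_{G/N}(H_q(N,M))<\dim(\mathfrak{g}/\mathfrak{n})^0(\bar g)$ holds for all $\bar g$, nor why canonical dimension should strictly drop. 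Absent a concrete mechanism for either of these, case (ii) remains open.
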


Of course, if either $g$ has finite order or $g$ has infinite order and $\dim C_\mathfrak{g}(g)=1$ then $\mathfrak{g}^0(g)=C_\mathfrak{g}(g)$. Moreover if $G$ is finite-by-nilpotent then $\mathfrak{g}^0(g)=\dim G$ for every $g\in G$.

\subsection{Acknowledgments} Much of this work was done whilst the author was a EPSRC postdoctoral fellow under research grant EP/C527348/1. He would like to thank Konstantin Ardakov for many helpful conversations.

\section{Preliminaries}

\subsection{Notation}

Let $G$ be a compact $p$-adic Lie group. We define the Iwasawa algebra \[ \Zp G:= \invlim\Zp [G/N] \] where $N$ runs over all the open normal subgroups of $G$ and $\Zp[G/N]$ denotes the usual algebraic group algebra. If $H$ is a closed normal subgroup of $G$, we write $I_{H,G}$ for the kernel of the augmentation map $\Zp G\rightarrow \Zp G/H$. 

Given a profinite ring $R$, we write $\lmod{R}$ for the category of profinite left $R$-modules and continuous $R$-module homomorphisms. Then $H_i(G,-)$ is the $i$th derived functor of the functor \[ (-)_G\colon \lmod{\Zp G}\rightarrow\lmod{\Zp} \] that sends a module $M$ to its $G$-coinvariants $M/I_{G,G} M$. Since $(-)_G=\Zp\otimes_{\Zp G}(-)$ as functors, it follows that $H_i(G,M)\cong \Tor_i^{\Zp G}(\Zp,M) \mbox{ for each }i\geqslant 0$ (see \cite[section 6.3]{RibZal2000} for more details.) 
 
If $X$ is a subset of a profinite group we will write $\langle X\rangle$ for the closed subgroup of $G$ generated by $X$. We write $Z(G)$ for the centre of $G$.

If $S$ is a (left and right) Ore set in a ring $R$ and $M$ is an $R$-module, we write $R_S$ for the localisation of $R$ at $S$, and $M_S$ for the localisation of $M$ at $S$.

$\Gamma$ will always denote a group isomorphic to $\Zp$. 
 
\subsection{A little group theory} \label{group}

\begin{defn} Recall that a group $G$ is \emph{finite-by-nilpotent} if it has a finite normal subgroup $N$ such that $G/N$ is nilpotent.
\end{defn}

\begin{lem} Suppose that $G$ is a compact $p$-adic Lie group that is finite-by-nilpotent and has no elements of order $p$
\begin{enumerate}
\item $G$ has a maximal finite normal subgroup $\Delta^+(G)$
\item $G/\Delta^+$ is a nilpotent pro-$p$ group without elements of order $p$.
\end{enumerate}
\end{lem}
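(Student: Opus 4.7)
The plan is to deduce the structure of finite normal subgroups of $G$ from the structure theory of compact nilpotent $p$-adic Lie groups, using the given finite normal subgroup $N$ with $G/N$ nilpotent.

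First I would recall two structural facts. Every profinite nilpotent group is the internal direct product of its Sylow pro-$\ell$ subgroups. For a compact $p$-adic Lie group only the pro-$p$ Sylow can be infinite: a pro-$\ell$ $p$-adic Lie group with $\ell\neq p$ has trivial open pro-$p$ subgroup of finite index, so is finite. Combining these, any compact nilpotent $p$-adic Lie group $H$ splits as $H=H_p\times F$ with $H_p$ pro-$p$ and $F$ finite of order prime to $p$. By Lazard's theory $H_p$ has a torsion-free open uniform subgroup, so the torsion subgroup of $H_p$ is finite, and hence so is the full torsion subgroup $T(H)=T(H_p)\times F$ of $H$.

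Applied to $H:=G/N$ this yields the key bound for (1): for any finite normal subgroup $K\leqslant G$ the image $KN/N$ is a finite normal subgroup of $H$, hence contained in the (characteristic, finite) subgroup $T(H)$, so $|K|\leqslant|N|\cdot|T(H)|$ uniformly in $K$. Because the collection of finite normal subgroups of $G$ is closed under the product operation $(K_1,K_2)\mapsto K_1K_2$, this uniform bound forces a unique maximum element $\Delta^+$.

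For (2), maximality forces $N\subseteq\Delta^+$, so $G/\Delta^+$ is a quotient of $G/N$, hence is nilpotent and compact $p$-adic Lie; apply the splitting above to write $G/\Delta^+=K\times E$ with $K$ pro-$p$ and $E$ finite of order prime to $p$. The preimage of $E$ in $G$ is a finite normal subgroup, so maximality of $\Delta^+$ forces $E=1$. Similarly the torsion subgroup $T(K)$ is finite and characteristic in $K$, so its preimage in $G$ is finite and normal, giving $T(K)=1$. Hence $G/\Delta^+=K$ is nilpotent pro-$p$ and torsion-free, as required. The main technical input is the finiteness of torsion in the pro-$p$ factor via Lazard; everything else is bookkeeping around the Sylow decomposition of profinite nilpotent groups.
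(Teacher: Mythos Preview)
Your argument is correct. The paper itself does not prove this lemma at all: it simply cites \cite[1.3]{ArdBro2007} for part (1) and \cite[Lemma 4.1]{Ard2006} for part (2). Your proof is therefore strictly more informative than what appears in the paper.

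A few remarks on the details, all of which you have right but which are worth making explicit. For the finiteness of $T(H_p)$, the cleanest way to phrase your Lazard step is: take an open torsion-free (uniform) normal subgroup $U\leqslant H_p$; since $H_p$ is nilpotent the torsion elements form an abstract subgroup $T(H_p)$, and $T(H_p)\cap U=1$, so $T(H_p)$ injects into the finite quotient $H_p/U$. For part (2), once $E=1$ you have $G/\Delta^+=K$, so $T(K)$ is characteristic in $G/\Delta^+$ and its preimage is genuinely normal in $G$; this is the point where ``characteristic in $K$'' is actually needed. Finally, ``torsion-free'' and ``no elements of order $p$'' coincide for the pro-$p$ group $K$, matching the statement.

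So: correct, and self-contained where the paper defers to the literature.
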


\begin{proof}
For part (1) see \cite[1.3]{ArdBro2007}. Part (2) follows from \cite[Lemma 4.1]{Ard2006}.
\end{proof}

\subsection{Properties of $\Zp G$} 

We record some standard properties of Iwasawa algebras that we will use without further comment; see \cite{ArdBro2006} for references to proofs.

\begin{lem} Suppose that $G$ is a compact $p$-adic Lie group of dimension $d$.
\begin{enumerate}
\item $\Zp G$ is a left and right Noetherian ring;
\item $\Zp G$ is an Auslander-Gorenstein ring of dimension $d+1$;
\item $\Zp G$ has finite global dimension if and only if $G$ has no elements of order $p$;
\end{enumerate}
\end{lem}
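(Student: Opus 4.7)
The plan is to reduce each of the three statements to the case of a uniform pro-$p$ open subgroup $H$ of $G$ (which exists by Lazard's theorem and can be chosen of dimension $d$), and then apply Lazard's description of the associated graded of $\Zp H$: namely, $\Zp H$ admits a separated complete filtration whose associated graded ring is the polynomial ring $\Fp[X_0,\ldots,X_d]$ in $d+1$ variables. Since passage from $H$ to $G$ makes $\Zp G$ into a free left and right $\Zp H$-module of rank $[G:H]$, most ring-theoretic properties transfer directly. In the spirit of the surrounding lemma, the role of the proof is really to point at \cite{ArdBro2006} for the detailed references; I would simply indicate which ingredient powers each part.

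For (1), the polynomial ring $\Fp[X_0,\ldots,X_d]$ is commutative Noetherian, and a standard lifting argument for complete filtered rings (e.g.\ a Rees-ring/graded-lift argument) shows that $\Zp H$ inherits Noetherianness. Then $\Zp G$ is Noetherian because it is finitely generated as a module over the Noetherian subring $\Zp H$ on either side. For (2), the same graded ring is regular commutative of global dimension $d+1$, hence Auslander-regular of dimension $d+1$; Bj\"ork's results on filtered-to-graded transfer give that $\Zp H$ is Auslander-Gorenstein (indeed Auslander-regular) of dimension $d+1$, and this property is preserved under the finite free extension $\Zp H\subseteq\Zp G$, proving the statement for $G$.

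For (3), the main input is that if $G$ has no elements of order $p$ then one can arrange $H$ to be torsion-free pro-$p$ (using Lazard's identification of compact $p$-adic Lie groups with uniform pro-$p$ groups up to finite index and standard results on $p$-torsion). Then $\Zp H$ has global dimension equal to $d+1$, matching the associated graded. Passage to $\Zp G$ uses that $\Zp$ admits a finite projective resolution over $\Zp G$ of length $d+1$ (this is essentially the statement that the virtual cohomological dimension is $d+1$), together with Noetherianity to resolve arbitrary finitely generated modules. For the converse, if $g\in G$ has order $p$ then $C:=\langle g\rangle\cong\mathbb{Z}/p$ has periodic cohomology, so $\Zp$ has infinite projective dimension over $\Zp C$; since $\Zp G$ is free of finite rank as a $\Zp C$-module, $\Zp$ has infinite projective dimension over $\Zp G$ as well, forcing $\gld(\Zp G)=\infty$.

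The main obstacle in a first-principles treatment is (2), specifically the correct statement and proof that the Auslander-Gorenstein condition lifts from the associated graded through a complete filtration; this requires some care with Ext-spectral-sequence arguments and the Auslander condition on grades. However, since all three statements are extracted verbatim from the literature, my proof plan is essentially to assemble the three black boxes (Lazard's filtration, Bj\"ork's filtered-to-graded transfer, and Brumer-type computations of cohomological dimension) and cite \cite{ArdBro2006} for the precise bookkeeping.
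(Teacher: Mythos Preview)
Your proposal is correct and is precisely the standard argument underlying the references the paper invokes; the paper itself gives no proof at all beyond the sentence ``see \cite{ArdBro2006} for references to proofs'', so your sketch is already more detailed than what appears there.
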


\subsection{The Canonical dimension function} \label{can}
Recall that if $R$ is an Auslander-Gorenstein ring then there is a canonical dimension function $\delta$ on the category of non-zero finitely generated $R$-modules given by $\delta(M)=\dim R-j_R(M)$ where $j_R(M)=\inf\{j|\Ext^j_R(M,R)\neq 0\}$. 

We say a module is \emph{pseudo-null} if $\delta(M)\leqslant \injdim(R)-2$.

When $R=\Zp G$ for $G$ a compact $p$-adic Lie group we write $d_G(M)$ for $\delta(M)$. Then $M$ is pseudo-null as a $\Zp G$-module when $d_G(M)\leqslant\dim G-1$. 
 
\subsection{Homology and base change} \label{basechange}

\begin{lem} Suppose we have rings $R$ and $S$, a ring homomorphism $R\rightarrow S$, a right $S$-module $N$, and a left $R$-module $M$.
\begin{enumerate}
\item If $S$ is a flat as a right $R$-module then \[ \Tor_i^R(N,M)\cong\Tor_i^S(N,S\otimes_R M) \] for each $i\geqslant 0$.
\item In general, there is a base change spectral sequence \[E^2_{ij}=\Tor_i^S(N,\Tor_j^R(S,M)) \Longrightarrow \Tor_{i+j}^R(N,M).\]
\end{enumerate}
\end{lem}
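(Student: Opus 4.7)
For part (1), the plan is to lift a projective resolution of $M$ over $R$ to one of $S \otimes_R M$ over $S$. Starting from a projective resolution $P_\bullet \to M$ of $R$-modules, flatness of $S$ as a right $R$-module preserves the exactness of the resolution, and tensoring a free $R$-module with $S$ produces a free $S$-module, so each $S \otimes_R P_i$ (being a summand of such a free module) is $S$-projective. Thus $S \otimes_R P_\bullet \to S \otimes_R M$ is an $S$-projective resolution. Applying $N \otimes_S -$ and invoking the natural associativity isomorphism $N \otimes_S (S \otimes_R P_i) \cong N \otimes_R P_i$ then identifies the $i$th homology of the resulting complex with both $\Tor_i^S(N, S \otimes_R M)$ and $\Tor_i^R(N, M)$.

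For part (2), I would appeal to the Grothendieck spectral sequence for the composition of the right-exact functors $G = S \otimes_R (-)$, from left $R$-modules to left $S$-modules, and $F = N \otimes_S (-)$, from left $S$-modules to abelian groups. The composite $F \circ G$ is naturally isomorphic to $N \otimes_R (-)$, and the three left derived functors in play are precisely $\Tor_j^R(S,-)$, $\Tor_i^S(N,-)$ and $\Tor_n^R(N,-)$. Grothendieck's theorem then delivers the required spectral sequence $E^2_{ij} = \Tor_i^S(N, \Tor_j^R(S, M)) \Longrightarrow \Tor_{i+j}^R(N, M)$, provided $G$ sends projectives to $F$-acyclics; but the argument of part (1) gives more, namely that $G$ takes $R$-projectives to $S$-projectives, which are certainly $F$-acyclic.

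I expect no serious obstacle: both assertions are standard manipulations of derived functors, and part (1) can also be recovered from part (2) by collapse of the spectral sequence when flatness forces $\Tor_j^R(S, M) = 0$ for $j > 0$. Should one wish to avoid citing Grothendieck's theorem directly, the spectral sequence can be built by hand: take an $R$-projective resolution $P_\bullet \to M$, then a Cartan--Eilenberg projective resolution of the complex $S \otimes_R P_\bullet$ of $S$-modules, and read off the two spectral sequences of the double complex obtained upon applying $N \otimes_S -$; one filtration yields the stated $E^2$-page after identifying $H_j(S \otimes_R P_\bullet) = \Tor_j^R(S, M)$, while the other collapses to $H_n(N \otimes_R P_\bullet) = \Tor_n^R(N, M)$.
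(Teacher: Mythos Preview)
Your proposal is correct; the paper's own proof simply cites \cite[Theorem 5.6.6]{Wei1995} for part (2) and remarks that part (1) follows from it by collapse, which is exactly the alternative route you mention at the end. Your direct argument for (1) via lifting a projective resolution, and your derivation of (2) from the Grothendieck spectral sequence, are precisely what underlies that citation, so the approaches are essentially the same with yours spelled out in more detail.
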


\begin{proof}
Part (1) follows immediately from part (2). Part (2) is \cite[Theorem 5.6.6]{Wei1995}. 
\end{proof}

\subsection{Computation of homology groups when $G\cong\Zp$} \label{homology}
\begin{lem} If $G=\Zp=\langle z\rangle$ and $M$ is a profinite $\Zp G$-module then 
\begin{enumerate}
\item $H_i(G,M)=0$ unless $i=0,1$;
\item $H_0(G,M)=M/(z-1)M$;
\item $H_1(G,M)=M^G=\ker (z-1)\colon M\rightarrow M$. 
\end{enumerate}
\end{lem}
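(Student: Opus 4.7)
The plan is to exhibit an explicit length-one projective resolution of $\Zp$ by (profinite) $\Zp G$-modules and then apply $-\otimes_{\Zp G} M$.

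First I would identify the Iwasawa algebra $\Zp G$ in this case as the power series ring $\Zp\llbracket z-1\rrbracket$, so in particular it is an integral domain. Set $T=z-1$; this element is a non-zero-divisor in $\Zp G$, and the augmentation ideal $I_{G,G}$ is the closed principal ideal $T\cdot\Zp G$. Hence one has a short exact sequence of profinite $\Zp G$-modules
\[ 0\longrightarrow \Zp G \xrightarrow{\ \cdot(z-1)\ } \Zp G \xrightarrow{\ \epsilon\ } \Zp \longrightarrow 0, \]
where $\epsilon$ is the augmentation. Since $\Zp G$ is (topologically) free over itself, this is a projective resolution of $\Zp$ in $\lmod{\Zp G}$ of length $1$.

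Next I would apply $-\otimes_{\Zp G} M$ to the truncated resolution $0\to \Zp G\to \Zp G\to 0$ and compute homology. The resulting complex is simply
\[ 0\longrightarrow M\xrightarrow{\ (z-1)\cdot\ } M\longrightarrow 0, \]
placed in degrees $1$ and $0$. Reading off homology gives $H_0(G,M)=M/(z-1)M$, $H_1(G,M)=\ker\bigl((z-1)\cdot\colon M\to M\bigr)=M^G$, and $H_i(G,M)=0$ for all $i\geqslant 2$, which is exactly parts (1)--(3).

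I do not anticipate a substantive obstacle: the only technicality is verifying that the displayed sequence is exact as a sequence of \emph{profinite} modules and that its terms are projective in $\lmod{\Zp G}$, so that it really does compute $\Tor^{\Zp G}_*(\Zp,M)$ as defined in the preliminaries. Both points follow from $\Zp G\cong\Zp\llbracket T\rrbracket$ being a (topologically) local Noetherian domain with augmentation ideal the closed principal ideal $(T)$, so multiplication by $T$ is a closed injection with image $I_{G,G}$ and cokernel $\Zp$; the identification $(-)_G=\Zp\otimes_{\Zp G}(-)$ recalled in the Notation subsection then yields the stated formulas.
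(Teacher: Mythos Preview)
Your proposal is correct and follows essentially the same approach as the paper: exhibit the length-one free resolution $0\to\Zp G\xrightarrow{(z-1)\cdot}\Zp G\to\Zp\to 0$ and tensor with $M$ to read off the homology. The paper's proof is just a terser version of exactly this argument.
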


\begin{proof} The map $\Zp G\rightarrow\Zp G$ sending $\alpha$ to $(z-1)\alpha$ defines a projective resolution of $\Zp$ as a $\Zp G$-module. All three parts follow easily.
\end{proof}

\subsection{Localisation at augmentation ideals} \label{localisable}

Recall that a semi-prime ideal $I$ in a ring $R$ is \emph{localisable} if the set of elements of $r$ in $R$ such that $r+I$ is regular in $R/I$ forms an Ore set in $R$. The following result of Ardakov explains, at least in part, the importance of the condition that $G$ be finite-by-nilpotent.

\begin{thm}[Theorem A of \cite{Ard2006}] If $G$ is a compact $p$-adic Lie group and $H$ is a closed normal subgroup then the kernel $I_{H,G}$ of the augmentation map $\Zp G\rightarrow\Zp G/H$ is localisable if and only if $H$ is a finite-by-nilpotent group.
\end{thm}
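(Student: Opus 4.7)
The strategy is to treat the two implications by very different methods. For $(\Leftarrow)$ I would reduce to a commutative Ore question via a filtered-graded argument; for $(\Rightarrow)$ I would argue by contrapositive, exhibiting a concrete failure of the Ore condition in a minimal non-nilpotent test case.

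For $(\Leftarrow)$, suppose $H$ is finite-by-nilpotent. First split off the finite piece: pass to $\bar H := H/\Delta^+(H)$ via the lemma of Section~\ref{group}, so that $\bar H$ is a torsion-free nilpotent pro-$p$ group; the reduction to this case is standard, since $\Delta^+(H)$ is finite and the contribution of a finite normal subgroup to the Ore condition is benign. Next equip $\Zp \bar H$ with Lazard's dimension filtration, whose associated graded is a commutative graded ring (a polynomial algebra over $\Fp$), and extend this to a filtration on $\Zp G$ using coset representatives for $G/\bar H$; the associated graded $\gr \Zp G$ is then a finite extension of a commutative Noetherian ring. The Ore condition for the image of $I_{H,G}$ is automatic on the commutative part, and a standard ``lift Ore from graded to filtered'' argument, using completeness of $\Zp G$ in its filtration, transfers the Ore condition to $\Zp G$ itself.

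For $(\Rightarrow)$, argue by contrapositive. If $H$ is not finite-by-nilpotent then its $\Qp$-Lie algebra $\mathfrak{h}_{\Qp}$ is not nilpotent, and one can find $x,y \in H$ together with a continuous surjection from $\overline{\langle x,y\rangle}$ onto a compact $p$-adic Lie group whose Lie algebra is the two-dimensional non-abelian solvable algebra with $[x,y]=y$. The plan is to work inside this quotient and exhibit an $r \in I_{H,G}$ together with an $s \in \mathcal{C}(I_{H,G})$ for which the left Ore equation $s'r = r's$ admits no solution with $s' \in \mathcal{C}(I_{H,G})$: the non-nilpotent commutation will force any putative $s'$ to carry infinitely many $\Zp$-linearly independent coefficients in its series expansion, incompatible with $s' \in \Zp G$. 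Pulling this obstruction back from the two-dimensional quotient to $\Zp G$ is routine once it has been identified in the test case.

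The main obstacle is the forward direction. Both the reduction of an arbitrary non-finite-by-nilpotent $H$ to the two-dimensional bad case, which uses the Lie-algebra description of compact $p$-adic Lie groups together with the classical characterisation of nilpotence via the lower central series, and the explicit Ore-obstruction computation inside the completed group ring of the two-dimensional quotient, require care. The backward direction, by contrast, is mostly an exercise in Lazard's machinery combined with the standard filtered-to-graded lifting of Ore conditions.
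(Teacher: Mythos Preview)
The paper does not prove this statement at all: it is quoted as Theorem~A of \cite{Ard2006} and used as a black box in Sections~\ref{Eulerdefd} and~\ref{Akdefd}. There is therefore no proof in the present paper to compare your proposal against.

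On the proposal itself, the overall architecture is reasonable and broadly in the spirit of how such results are proved, but two steps are more delicate than you suggest. In $(\Leftarrow)$, the phrase ``the contribution of a finite normal subgroup to the Ore condition is benign'' hides real content: the theorem is stated for arbitrary compact $p$-adic Lie groups, so $\Delta^+(H)$ may well have order divisible by $p$, in which case $\Zp\Delta^+(H)$ is not semisimple and the passage to $H/\Delta^+(H)$ is not a clean crossed-product reduction; Lemma~\ref{group} in the paper only applies when $G$ has no $p$-torsion. In $(\Rightarrow)$, the assertion that an Ore obstruction found in a two-dimensional quotient can be ``routinely pulled back'' to $\Zp G$ is not obvious: localisability does not in general transfer along ring surjections in the direction you need without a further argument linking regularity modulo $I_{H,G}$ upstairs to regularity modulo the image ideal downstairs. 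Both points are addressed in \cite{Ard2006}, and you should consult that paper rather than the one under review for the actual proof.
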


\subsection{Akashi series}\label{Akashi}

This material in this section is largely taken from \cite[section 4]{CoaSchSuj2003/2}.

\begin{defn} If $M$ is a finitely generated torsion $\Zp\Gamma$-module then there is an exact sequence of $\Zp\Gamma$-modules \[ 0\rightarrow \bigoplus_{i=1}^r \Zp\Gamma/\Zp\Gamma f_i\rightarrow M\rightarrow D\rightarrow 0 \] with $D$ pseudo-null. The \emph{characteristic element} of $M$ is defined by \[ f_M=\prod f_i\] and is uniquely determined up to multiplication by a unit in $\Zp\Gamma$.
\end{defn}

Now write $Q(\Gamma)$ for the field of fractions of $\Zp\Gamma$. 

\begin{defn} If $G$ is isomorphic to a semidirect product $G\cong H\rtimes\Gamma$ and $M$ is a finitely generated $\Zp G$-module such that $H_j(H,M)$ is a torsion $\Zp\Gamma$-module for each $j\geq 0$, then the \emph{Akashi series} of $M$ is given by \[ Ak_H(M)=\prod_{j\geqslant 0}(f_{H_j(H,M)})^{(-1)^j}\in Q(\Gamma)/(\Zp\Gamma)^\times.\] In this case we say that the Akashi series of $M$ is well-defined. We will supress the subscript $H$ when no confusion will result.
\end{defn}

\begin{rmk} Strictly our definition of the Akashi series is a little more general than that in \cite{CoaSchSuj2003/2}; we extend their definition to some modules that need not be finitely generated over $\Zp H$. One consequence of this is that unlike in their version if a $p$-torsion module has well-defined Akashi series it need not necessarily be trivial. However the proofs in the lemma below are identical. 
\end{rmk}

\begin{lem} Suppose $G$ is isomorphic to a semidirect product $G\cong H\rtimes\Gamma$
\begin{enumerate} 
\item If $0\rightarrow L\rightarrow M\rightarrow N\rightarrow 0$ is a short exact sequence of finitely generated $\Zp G$-modules with well-defined Akashi series then $Ak(M)=Ak(L).Ak(N)$. 
\item If $M$ is a finitely generated $\Zp G$-module with well-defined Euler characteristic, then $M$ has well-defined Akashi series and \[ \chi(G,M)=|\epsilon(Ak(M))|_p^{-1} \] where $\epsilon$ is the augmentation map $\Zp\Gamma\rightarrow \Zp$.
\end{enumerate}
\end{lem}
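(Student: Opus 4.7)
The plan is to prove (1) by d\'evissage on the long exact sequence in $H$-homology, and to prove (2) via the Hochschild--Serre spectral sequence, which collapses because $\Gamma\cong\Zp$ forces $H_i(\Gamma,-) = 0$ for $i \geqslant 2$ by section \ref{homology}.

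For (1), applying $H_*(H,-)$ to $0 \to L \to M \to N \to 0$ produces a long exact sequence
\[ \cdots \to H_{j+1}(H,N) \to H_j(H,L) \to H_j(H,M) \to H_j(H,N) \to H_{j-1}(H,L) \to \cdots \]
of $\Zp\Gamma$-modules, all of whose terms are torsion since the Akashi series of $L$, $M$, $N$ are assumed well-defined. Cut this into short exact sequences at each position; multiplicativity of the characteristic element on short exact sequences of finitely generated torsion $\Zp\Gamma$-modules (a standard fact, immediate from additivity of length on localisations at the height-one primes of $\Zp\Gamma$) then gives, upon taking the alternating product over $j$, the identity $Ak(M) = Ak(L) \cdot Ak(N)$, the intermediate kernel--image terms cancelling in pairs.

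For (2), the Hochschild--Serre spectral sequence $E^2_{ij} = H_i(\Gamma, H_j(H,M)) \Longrightarrow H_{i+j}(G,M)$ has non-zero terms only in columns $i = 0, 1$, so $E^2 = E^\infty$ and the convergence yields short exact sequences
\[ 0 \to H_0(\Gamma, H_n(H,M)) \to H_n(G,M) \to H_1(\Gamma, H_{n-1}(H,M)) \to 0. \]
When $H_n(G,M)$ is finite, so is $H_0(\Gamma, H_n(H,M)) = H_n(H,M)/(z-1)H_n(H,M)$; since $H_n(H,M)$ is finitely generated over $\Zp\Gamma$ (by computing it from a finitely generated free $\Zp H$-resolution of $\Zp$, using that $\Zp G$ is a crossed product of $\Zp\Gamma$ with $\Zp H$), the structure theory of $\Zp\Gamma$-modules forces $H_n(H,M)$ to be $\Zp\Gamma$-torsion. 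This establishes well-definedness of $Ak(M)$.

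Taking alternating products of orders and reindexing the second factor in each short exact sequence gives
\[ \chi(G,M) = \prod_n \left( \frac{|H_0(\Gamma, H_n(H,M))|}{|H_1(\Gamma, H_n(H,M))|} \right)^{(-1)^n}. \]
Combined with the classical identity $|H_0(\Gamma, N)| / |H_1(\Gamma, N)| = |\epsilon(f_N)|_p^{-1}$ for any finitely generated torsion $\Zp\Gamma$-module $N$ with $H_0(\Gamma, N)$ finite, this yields $\chi(G,M) = |\epsilon(Ak(M))|_p^{-1}$. The main technical input---the step best dispatched by citation---is this classical identity, which reduces (by multiplicativity and the observation that both sides are trivial on pseudo-null, i.e.\ finite, $\Zp\Gamma$-modules) to the case $N = \Zp\Gamma/(f)$ with $f$ distinguished and $f(1) \neq 0$, where it follows from $\det(z-1 \mid N \otimes \Qp) = \pm f(1)$.
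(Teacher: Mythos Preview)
Your proof is correct and follows exactly the standard route: the long exact sequence in $H$-homology for part (1), and the two-column Hochschild--Serre spectral sequence together with the classical formula $\chi(\Gamma,N)=|\epsilon(f_N)|_p^{-1}$ for part (2). The paper does not give its own proof here but defers to \cite[section 4]{CoaSchSuj2003/2}, where the argument is precisely the one you have written out; so your proposal matches the intended proof.
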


\section{Characterisation of when the nature of the zeroth homology group suffices to determine well-definition}

\subsection{Modules with well-defined Euler characteristic} \label{Eulerdefd}

Suppose that $G$ is any compact $p$-adic Lie group with no elements of order $p$. We want to study those finitely generated left $\Zp G$-modules $M$ with well-defined Euler characteristic. Since the groups $H_i(G,M)\cong\Tor_i^{\Zp G}(\Zp,M)$ are finitely generated $\Zp$-modules we just need to know when they are all $p$-torsion.

\begin{thm} If $G$ is a compact $p$-adic Lie group without no elements of order $p$, then $G$ is finite-by-nilpotent if and only the following are equivalent for a finitely generated left $\Zp G$-module $M$
\begin{enumerate} \item $M$ has well-defined Euler characteristic; \item $H_0(G,M)$ is finite. \end{enumerate} 
 \end{thm}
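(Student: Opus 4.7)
The implication $(1) \Rightarrow (2)$ is trivial. For the full biconditional I would lean on Ardakov's characterisation recalled in \S\ref{localisable}: $S := \Zp G \setminus I_{G,G}$ is an Ore set in $\Zp G$ if and only if $G$ is finite-by-nilpotent.

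Suppose first that $G$ is finite-by-nilpotent, so $S$ is Ore. The plan is to propagate finiteness of $H_0$ into every $H_i$ via localisation at $S$. I would first check that $\Zp G_S$ is local with residue field $\Qp$: one has $\Zp G_S/I_{G,G}\Zp G_S = \Zp_S = \Qp$, and any proper ideal of $\Zp G_S$ contracts into $\Zp G \setminus S = I_{G,G}$, so $I_{G,G}\Zp G_S$ is the Jacobson radical. Since
\[
M_S \otimes_{\Zp G_S} \Qp \;=\; M \otimes_{\Zp G} \Qp \;=\; H_0(G,M) \otimes_\Zp \Qp,
\]
finiteness of the f.g.\ $\Zp$-module $H_0(G,M)$ is equivalent to $M_S/I_{G,G}M_S = 0$, which by Nakayama forces $M_S = 0$. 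Assuming this, flat base change (\S\ref{basechange}) applied to $\Zp G \to \Zp G_S$, together with flatness of $\Zp \to \Qp$, yields
\[
H_i(G,M) \otimes_\Zp \Qp \;=\; \Tor_i^{\Zp G}(\Qp,M) \;=\; \Tor_i^{\Zp G_S}(\Qp, M_S) \;=\; 0,
\]
so each $H_i(G,M)$ is $\Zp$-torsion. Because $\Zp G$ is Noetherian of finite global dimension (using no $p$-torsion, \S2.3), $M$ admits a finite resolution by f.g.\ projective $\Zp G$-modules; tensoring with $\Zp$ exhibits each $H_i(G,M)$ as a subquotient of a f.g.\ $\Zp$-module, hence finite.

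Conversely, suppose $G$ is not finite-by-nilpotent, so $S$ fails the (left) Ore condition: there exist $s \in S$ and $r \in \Zp G$ with $Sr \cap \Zp G s = \emptyset$. Let $J := \Zp G s + \Zp G r$ and $M := \Zp G /J$. Then
\[
H_0(G,M) \;=\; \Zp G/(J + I_{G,G}) \;=\; \Zp/(\epsilon(s)\Zp + \epsilon(r)\Zp)
\]
is finite since $\epsilon(s) \neq 0$. Applying $H_*(G,-)$ to $0 \to J \to \Zp G \to M \to 0$ and using $H_1(G, \Zp G) = 0$, I would identify $H_1(G,M)$ with $(J \cap I_{G,G})/I_{G,G}J$; since $I_{G,G}$ is two-sided, $I_{G,G}J = I_{G,G}s + I_{G,G}r$. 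The element $x := \epsilon(s) r - \epsilon(r) s$ lies in $J \cap I_{G,G}$. If one had $p^k x \in I_{G,G}J$, say $p^k\epsilon(s)r - p^k\epsilon(r)s = \gamma s + \delta r$ with $\gamma, \delta \in I_{G,G}$, then $(p^k\epsilon(s) - \delta)r = (p^k\epsilon(r) + \gamma)s$; but the left coefficient has augmentation $p^k\epsilon(s) \neq 0$ and hence lies in $S$, contradicting $Sr \cap \Zp G s = \emptyset$. Thus the class of $x$ has infinite order in $H_1(G,M)$, providing the required counterexample.

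The main obstacle is pinpointing the counterexample in the necessity direction: the most natural candidates $\Zp G/\Zp G s$ all satisfy $H_1 = 0$ (one checks directly that $\Zp G s \cap I_{G,G} = I_{G,G} s$), so one has to upgrade to the two-generator ideal $\Zp G s + \Zp G r$ encoding a witness to the Ore failure and extract the explicit infinite-order cycle $\epsilon(s)r - \epsilon(r)s$. The sufficiency direction, by contrast, is routine once the local structure of $\Zp G_S$ and the flat base change formula for $\Tor$ are in place.
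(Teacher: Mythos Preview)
Your proof is correct and follows essentially the same approach as the paper: both directions hinge on Ardakov's characterisation of when $S$ is Ore, with the forward direction via Nakayama on the local ring $\Zp G_S$ and the reverse via the module $\Zp G/(\Zp G r + \Zp G s)$ built from a witness to the Ore failure. The only cosmetic differences are that the paper localises $H_i(G,M)$ at $S$ (using that $G$ acts trivially) where you tensor with $\Qp$ over $\Zp$, and the paper computes $H_1$ by showing $\overline{d_1}=0$ in a free resolution (giving $H_1(G,M)\cong\Zp$ outright) where you exhibit the explicit infinite-order class $\epsilon(s)r-\epsilon(r)s$ via the long exact sequence.
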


\begin{proof} 
Using Theorem \ref{localisable}, we know that $G$ is finite-by-nilpotent if and only if $S:=\Zp G\backslash I_{G,G}$ is an Ore set. 

Suppose first that $S$ is an Ore set so we may form the localisation $\Zp G_S$. Then $\Zp G_S$ is a local ring with maximal ideal $(I_{G,G})_S$ and residue field $\Qp$. 

Since $\Zp$ is a $\Zp G$-bimodule, $\Zp\otimes_{\Zp G}M$ is a left $\Zp G$-module. As we also have $(\Zp\otimes_{\Zp G}P)_S\cong \Qp\otimes_{{\Zp G}_S}P_S$ for every finitely generated projective left $\Zp G$-module $P$, it follows that \[ (\Tor_i^{\Zp G}(\Zp,M))_S\cong\Tor_i^{\Zp G_S}(\Qp,M_S) \mbox{ for each }i\geqslant 0.\] Moreover, since $G$ acts trivially on each group $\Tor_i^{\Zp G}(\Zp,M)$, they are all  $p$-torsion if and only if they are all $S$-torsion, if and only if $\Tor_i^{\Zp G_S}(\Qp,M_S)=0$ for every $i\geqslant 0$. 

As $\Zp G_S$ is local with residue field $\Qp$, and $M_S$ is finitely generated over $\Zp G_S$, Nakayama's Lemma tells us that $\Qp\otimes_{{\Zp G}_S} M_S=0$ if and only if $M_S=0$. Thus the Euler characteristic of $M$ is well-defined  if and only if $M_S=0$ if and only if $\Zp\otimes_{\Zp G}M$ is finite as required. 

Suppose now that $S$ is not an Ore set This means that we can find $r\in\Zp G$ and $s\in S$ such that $Sr\cap \Zp Gs=\emptyset$. We consider the cyclic left $\Zp G$ module $M=\Zp G/\Zp G\langle r,s\rangle$ for this pair $r,s$. 

There is a free resolution of $M$ that begins \[ \cdots\rightarrow(\Zp G)^d\stackrel{d_1}{\rightarrow}(\Zp G)^2\stackrel{d_0}{\rightarrow}\Zp G\rightarrow M\rightarrow 0, \] with $d_0(\alpha,\beta)=\alpha r+\beta s$.

Now applying $\Zp\otimes_{\Zp G}(-)$ to this resolution yields a complex \[ \Zp^d\stackrel{\overline{d_1}}{\rightarrow}\Zp^2\stackrel{\overline{d_0}}{\rightarrow}\Zp\rightarrow 0, \] with homology $H_\bullet(G,M)$.

The condition that $Sr\cap \Zp Gs=\emptyset$ means that if $d_0(\alpha,\beta)=0$ then $\alpha\in I_{G,G}$. Since $S$ is multiplicatively closed it follows also that $\beta\in I_{G,G}$. Thus \[\im(d_1)=\ker(d_0)\subseteq I_{G,G}(\Zp G)^2\] and so $\overline{d_1}=0$. 
 
We also have $\overline{d_0}(a,b)=a\epsilon(r)+b\epsilon(s)$ where $\epsilon$ is the augmentation map $\Zp G\rightarrow \Zp$. As $s\in S$, $\epsilon(s)\neq 0$ and so $\overline{d_0}$ is not the zero map. It follows that  $H_0(G,M)$ is finite and $H_1(G,M)\cong \Zp$.
\end{proof}

\subsection{Multiplicativity of Euler Characteristic} \label{AddEuler}

Because the $S$-torsion modules form an abelian subcategory of all finitely generated $\Zp G$-modules we may prove 

\begin{prop} Suppose that $G$ is a finite-by-nilpotent compact $p$-adic Lie group. If $0\rightarrow L\rightarrow M\rightarrow N\rightarrow 0$ is a short exact sequence of finitely generated left $\Zp G$-modules then $M$ has well-defined Euler characteristic if and only if both $L$ and $N$ have well-defined Euler characteristic. Moreover in this case \[ \chi(G,M)=\chi(G,L)\cdot\chi(G,N).\] \end{prop}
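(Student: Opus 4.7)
The plan is to reduce the first assertion to the characterisation of well-defined Euler characteristic established in the previous theorem. Since $G$ is finite-by-nilpotent, $S := \Zp G \setminus I_{G,G}$ is an Ore set, and by that theorem a finitely generated $\Zp G$-module $X$ has well-defined Euler characteristic if and only if $X_S = 0$, i.e.\ if and only if $X$ is $S$-torsion. Localisation at the Ore set $S$ is exact, so the short exact sequence $0 \to L \to M \to N \to 0$ yields a short exact sequence $0 \to L_S \to M_S \to N_S \to 0$ of $\Zp G_S$-modules. Consequently $M_S = 0$ if and only if both $L_S = 0$ and $N_S = 0$, giving the equivalence of well-definedness.

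For the multiplicativity formula I would use the standard long exact sequence argument. Because $G$ has no elements of order $p$, the Iwasawa algebra $\Zp G$ has finite global dimension, so only finitely many of the groups $H_i(G,-)$ can be nonzero on any finitely generated module. The short exact sequence $0 \to L \to M \to N \to 0$ therefore induces a long exact sequence
\[ 0 \to H_n(G,L) \to H_n(G,M) \to H_n(G,N) \to \cdots \to H_0(G,L) \to H_0(G,M) \to H_0(G,N) \to 0 \]
of finitely generated $\Zp$-modules that eventually terminates. Under the assumption of well-defined Euler characteristic, every term in this sequence is a finite abelian group. Taking the alternating product of the orders along any finite exact sequence of finite abelian groups yields $1$, and a routine rearrangement of this identity produces
\[ \chi(G,M) = \chi(G,L)\cdot\chi(G,N), \]
as required.

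There is no serious obstacle here: the whole content lies in the localisation-theoretic characterisation already proved, together with the well-known fact that Euler characteristics are multiplicative along short exact sequences whenever the relevant homology is finite and eventually vanishing. The role of the finite-by-nilpotent hypothesis is solely to guarantee that $S$ is Ore, so that the class of modules with well-defined Euler characteristic coincides with the Serre subcategory of $S$-torsion modules and is therefore closed under subobjects, quotients, and extensions.
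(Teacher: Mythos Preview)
Your proposal is correct and follows essentially the same approach as the paper: the paper's own proof says only that the first part follows from the preceding theorem and the second part is read off from the long exact sequence of homology, which is exactly what you do (with more detail supplied about the exactness of localisation and the alternating-product identity).
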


\begin{proof} The first part follows from Proposition \ref{Eulerdefd}. The second part can be read off from the long exact sequence of homology.
\end{proof}

\subsection{Modules with well-defined Akashi series} \label{Akdefd}
There are analogous results for Akashi series, in particular

\begin{thm} Suppose $G$ is isomorphic to a semi-direct product $H\rtimes\Gamma$ and define \[ T:=\Zp G\backslash \ker(\Zp G\rightarrow \Zp\Gamma).\] The Akashi series of $M$ is well-defined for every finitely generated left $\Zp G$-module $M$ such that $H_0(H,M)$ is a torsion $\Zp\Gamma$-module if and only if $T$ is an Ore set in $\Zp G$ if and only if $H$ is finite-by-nilpotent.
\end{thm}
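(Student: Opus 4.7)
The plan is to adapt the proof of Theorem \ref{Eulerdefd} almost verbatim, replacing the augmentation $\Zp G\to\Zp$ and the ideal $I_{G,G}$ throughout by $\Zp G\to\Zp\Gamma$ and $I_{H,G}$. The equivalence that $T$ is Ore iff $H$ is finite-by-nilpotent is immediate from Theorem \ref{localisable}: the quotient $\Zp G/I_{H,G}=\Zp\Gamma$ is a commutative integral domain, so an element of $\Zp G$ is regular modulo $I_{H,G}$ iff it lies outside $I_{H,G}$, and $T$ is therefore exactly the set whose Ore property is by definition the localisability of $I_{H,G}$.

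For the forward direction, assume $T$ is Ore. Then $\Zp G_T$ exists and, by the same argument used for $\Zp G_S$ in Theorem \ref{Eulerdefd}, is a local ring with maximal ideal $(I_{H,G})_T$ and residue ring $\Zp G_T/(I_{H,G})_T=(\Zp\Gamma)_{\Zp\Gamma\setminus\{0\}}=Q(\Gamma)$. A Shapiro-type identification $H_j(H,M)\cong\Tor_j^{\Zp G}(\Zp\Gamma,M)$ as $\Zp\Gamma$-modules holds because the semidirect product decomposition $G\cong H\rtimes\Gamma$ makes $\Zp G$ free, and so flat, as a right $\Zp H$-module: tensoring a projective resolution of the right trivial $\Zp H$-module $\Zp$ up along $\Zp H\to\Zp G$ yields a projective resolution of $\Zp\otimes_{\Zp H}\Zp G=\Zp\Gamma$ as right $\Zp G$-module, and then applying $-\otimes_{\Zp G} M$ collapses the inner tensor. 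Tensoring with the flat $\Zp\Gamma$-module $Q(\Gamma)$ and invoking part (1) of Lemma \ref{basechange} for $\Zp G\to\Zp G_T$ produces
\[ H_j(H,M)\otimes_{\Zp\Gamma}Q(\Gamma)\cong\Tor_j^{\Zp G_T}(Q(\Gamma),M_T), \]
so $H_j(H,M)$ is $\Zp\Gamma$-torsion iff this vanishes. For $j=0$ the right-hand side equals $M_T/(I_{H,G})_T M_T$, and Nakayama's lemma for the local ring $\Zp G_T$ then forces $M_T=0$, killing all higher Tors. Hence $H_0(H,M)$ torsion forces every $H_j(H,M)$ torsion, so the Akashi series is well-defined.

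For the converse, if $T$ is not Ore, choose $r\in\Zp G$ and $s\in T$ with $Tr\cap\Zp Gs=\emptyset$ and consider $M=\Zp G/\Zp G\langle r,s\rangle$, with partial free resolution $(\Zp G)^d\xrightarrow{d_1}(\Zp G)^2\xrightarrow{d_0}\Zp G\to M\to 0$, $d_0(\alpha,\beta)=\alpha r+\beta s$. Exactly as in the proof of Theorem \ref{Eulerdefd} (using the multiplicative closure of $T$ and the fact that $\Zp\Gamma$ is a domain), $\ker d_0\subseteq I_{H,G}(\Zp G)^2$, so $\overline{d_1}=0$ after applying $\Zp\Gamma\otimes_{\Zp G}-$. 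Then $H_0(H,M)=\Zp\Gamma/(\bar r\Zp\Gamma+\bar s\Zp\Gamma)$ is $\Zp\Gamma$-torsion because $\bar s\neq 0$, while $H_1(H,M)=\ker\overline{d_0}$ contains $(\bar s,-\bar r)$, and since $\Zp\Gamma$ is a domain the cyclic submodule it generates is free of rank one; hence $H_1(H,M)$ is not $\Zp\Gamma$-torsion and the Akashi series of $M$ is not well-defined.

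The main non-formal input is the structural claim that $(I_{H,G})_T$ lies in the Jacobson radical of $\Zp G_T$, which is what makes the Nakayama step in the forward direction go through. This is the direct analogue of the corresponding claim for $S$ in Theorem \ref{Eulerdefd} and should come out of the same completion-theoretic machinery in Ardakov's localisation theory; granting it, the rest of the argument is a formal manipulation of Ore localisations and Tor.
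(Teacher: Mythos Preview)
Your proof is correct and follows essentially the same route as the paper's own argument: invoke Theorem \ref{localisable} for the equivalence $T$ Ore $\Leftrightarrow$ $H$ finite-by-nilpotent, use that $\Zp G_T$ is local with residue ring $Q(\Gamma)$ together with Nakayama to show $H_0(H,M)$ torsion $\Rightarrow M_T=0\Rightarrow$ all $H_j(H,M)$ torsion, and for the converse exhibit the cyclic module $\Zp G/\Zp G\langle r,t\rangle$ with $H_0$ torsion but $H_1$ not. Your write-up in fact supplies more detail than the paper does (the explicit Shapiro-type identification $H_j(H,M)\cong\Tor_j^{\Zp G}(\Zp\Gamma,M)$ and the explicit nonzero element $(\bar s,-\bar r)$ in $H_1$). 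The hedge in your final paragraph is unnecessary: that the Ore localisation of a Noetherian ring at a localisable semiprime ideal is semilocal with Jacobson radical the extended ideal is a standard consequence of Goldie's theorem, and the paper takes it for granted in both Theorems \ref{Eulerdefd} and \ref{Akdefd}.
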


\begin{proof} The proof is nearly identical to that of Theorem \ref{Eulerdefd} so briefly:

Once again, that $T$ is an Ore set if and only if $H$ is finite-by-nilpotent follows immediately from Theorem \ref{localisable}. 

If $T$ is an Ore set then $\Zp G_T$ is a local ring with maximal ideal $(I_{H,G})_T$ and $H_0(H,M)$ is a torsion $\Zp\Gamma$-module if and only if \[ H_0(H,M)_T\cong M\otimes_{\Zp G_T}\Zp G_T/(I_{H,G})_T=0\] if and only if $M_T=0$. Thus $H_j(H,M)_T=0$ for every $j\geq 0$ if and only if $H_0(H,M)_T=0$ and the former holds if and only if $H_0(H,M)$ is $\Zp\Gamma$-torsion.

Conversely if $T$ is not an Ore set then there are elements $r\in\Zp G$ and $t\in T$ such that there are no elements $r'\in\Zp G$ and $t'\in T$ with $r't=t'r$. Thus the kernel of the map \[ (\Zp G)^2\rightarrow \Zp G; (\alpha,\beta)\mapsto \alpha r+\beta t\] is contained in $(I_{H,G})^2$. Using this fact to compute the homology groups $H_j(H,M)$ for $M=\Zp G/\Zp G\langle r,t\rangle$ we obtain $H_0(H,M)$ is $\Zp\Gamma$-torsion but $H_1(H,M)$ is not $\Zp\Gamma$-torsion. 
\end{proof}

\begin{cor} If $G$ is as above and $0\rightarrow L\rightarrow M\rightarrow N\rightarrow 0$ is a short exact sequence of finitely generated $\Zp G$-modules then $M$ has well-defined Akashi series if and only if $L$ and $N$ both have well-defined Akashi series.\hfill $\qed$
\end{cor}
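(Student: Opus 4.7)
The plan is to reduce the corollary to a purely formal consequence of the preceding theorem plus exactness of Ore localization, in direct analogy with Proposition \ref{AddEuler} for Euler characteristics.

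First I would invoke the previous theorem, in the case of interest where $H$ is finite-by-nilpotent so that $T=\Zp G\setminus\ker(\Zp G\to\Zp\Gamma)$ is a (two-sided) Ore set in $\Zp G$. The theorem (combined with the localization step in its proof) shows that a finitely generated $\Zp G$-module $P$ has well-defined Akashi series if and only if $H_0(H,P)$ is $\Zp\Gamma$-torsion, and that this in turn is equivalent to the vanishing of the localization $P_T$. Thus the three properties ``$P$ has well-defined Akashi series'', ``$H_0(H,P)$ is $\Zp\Gamma$-torsion'', and ``$P_T=0$'' coincide for any finitely generated $\Zp G$-module $P$.

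Next, because $T$ is an Ore set, the localization functor $(-)_T\colon\lmod{\Zp G}\to\lmod{\Zp G_T}$ is exact. Applying it to the given short exact sequence yields an exact sequence $0\to L_T\to M_T\to N_T\to 0$, so $M_T=0$ if and only if both $L_T=0$ and $N_T=0$. Combining with the reformulation from the first step gives the corollary at once.

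There is essentially no serious obstacle once the theorem is in hand: the whole point of introducing the Ore set $T$ is to linearise the higher-homology conditions into a single vanishing condition that is manifestly preserved by extensions, subobjects, and quotients. The only conceptual step worth flagging is that one really does need the full force of the theorem (that well-definedness is controlled by $H_0$ alone) rather than an attempt via the long exact sequence of $H_\bullet(H,-)$: while the implication ``$L,N$ have well-defined Akashi series $\Rightarrow$ $M$ does'' would follow from the Serre property of finitely generated torsion $\Zp\Gamma$-modules together with the long exact sequence, the reverse implication is not transparent from the long exact sequence alone, since one would have to propagate torsion of $H_0(H,M)$ upward through the $H_j(H,L), H_j(H,N)$. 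The localization argument bypasses this difficulty entirely, which is why it is the route to prefer.
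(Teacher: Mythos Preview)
Your proposal is correct and matches the paper's intended argument exactly. The paper gives no explicit proof (just $\qed$), but the implicit reasoning is precisely what you describe: by the preceding theorem and its proof, when $H$ is finite-by-nilpotent a finitely generated $\Zp G$-module has well-defined Akashi series if and only if it is $T$-torsion, and since localization at the Ore set $T$ is exact, the $T$-torsion modules form a Serre subcategory --- exactly parallel to the Euler-characteristic case in Proposition~\ref{AddEuler}.
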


\section{Triviality of Euler characterstic for pseudo-nulls}

\subsection{Reduction to torsion-free nilpotent $G$} \label{reduction}

Our main goal now is to prove the second part of our main result: that if $G$ is a finite-by-nilpotent compact $p$-adic Lie group without $p$-torsion and $M$ is a finitely generated pseudo-null $\Zp G$-module with well-defined Euler characteristic then $\chi(G,M)=1$. 

In fact we prove the apparently stronger result that if $G$ is a finite-by-nilpotent compact $p$-adic Lie group and $H$ is a closed normal subgroup such that $G\cong H\rtimes \Gamma$ then whenever $M$ is a finitely generated pseudo-null $\Zp G$-module with well-defined Akashi series we have $Ak_H(M)=1$. To see that the Euler characteristic version follows from this, observe that (except in the trivial case where $G$ is finite) we may always find such a closed normal subgroup $H$ and apply Lemma \ref{Akashi}(2).

We first reduce to the case that $G$ is nilpotent and pro-$p$.

\begin{lem} Suppose that $G\cong H\rtimes\Gamma$ is a compact $p$-adic Lie group with finite normal subgroup $\Delta$ such that $(|\Delta|,p)=1$. If $M$ is a finitely generated $\Zp G$-module then \[ H_i(H,M)\cong H_i(H/\Delta,M_\Delta)\mbox{ for each }i\geqslant 0,\] as $\Zp\Gamma$-modules and so $Ak_H(M)=Ak_{H/\Delta}(M_\Delta)$ if either is well-defined --- of course $G/\Delta\cong (H/\Delta)\rtimes\Gamma$. 

Moreover, $d_{G/\Delta}(M_\Delta)\leqslant d_G(M)$, and so $M_\Delta$ is a pseudo-null $\Zp G/\Delta$-module if $M$ is a pseudonull $\Zp G$-module.
\end{lem}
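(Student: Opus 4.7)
The proof plan is to exploit the central idempotent $e = \frac{1}{|\Delta|} \sum_{\delta \in \Delta} \delta \in \Zp G$, which exists because $|\Delta|$ is invertible in $\Zp$ and is central because $\Delta$ is normal in $G$. This produces a two-sided decomposition
\[\Zp G = \Zp G e \oplus \Zp G (1-e)\]
with $\Zp G e \cong \Zp(G/\Delta)$ as rings; restricting to $\Zp\Delta$ exhibits $\Zp$ as a direct summand of $\Zp \Delta$, hence as a projective $\Zp \Delta$-module.

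For the homology isomorphism I would feed this into the Lyndon--Hochschild--Serre spectral sequence
\[E^2_{p,q} = H_p(H/\Delta, H_q(\Delta, M)) \Longrightarrow H_{p+q}(H, M).\]
Projectivity of $\Zp$ over $\Zp \Delta$ makes $H_q(\Delta, M) = \Tor^{\Zp \Delta}_q(\Zp, M)$ vanish for $q > 0$, while $H_0(\Delta, M) = M_\Delta$, so the sequence collapses onto the bottom row and the edge maps give $H_i(H, M) \cong H_i(H/\Delta, M_\Delta)$ for every $i \geq 0$. Because $\Gamma$ normalises $\Delta$ and the whole construction is functorial in the $G$-module $M$, this isomorphism is $\Zp \Gamma$-linear; the Akashi series equality then falls out of the definition.

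For the dimension inequality I would first note that $M_\Delta = M/I_\Delta M$ coincides with $eM$, and that centrality of $e$ splits $M = eM \oplus (1-e)M$ as $\Zp G$-modules. Hence $\Ext^i_{\Zp G}(eM, \Zp G)$ is a direct summand of $\Ext^i_{\Zp G}(M, \Zp G)$, giving $j_{\Zp G}(eM) \geq j_{\Zp G}(M)$. A short verification---using that a $\Zp G e$-projective resolution of $eM$ remains projective over $\Zp G$ (since $\Zp Ge$ is a $\Zp G$-summand), and that $\Hom_{\Zp G}(-, \Zp G(1-e))$ vanishes on $\Zp G e$-modules---identifies $\Ext^i_{\Zp G}(eM, \Zp G)$ with $\Ext^i_{\Zp(G/\Delta)}(M_\Delta, \Zp(G/\Delta))$. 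Since $\dim(G/\Delta) = \dim G$, this gives $d_{G/\Delta}(M_\Delta) \leq d_G(M)$, and the pseudo-null statement follows.

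I expect no serious obstacle; the only point that demands care is checking $\Zp \Gamma$-equivariance of the collapsed isomorphism, but this follows from naturality of the spectral sequence in $M$ together with the observation that the $\Zp \Gamma$-structure on $H_i(H, M)$ is induced by the conjugation action of $\Gamma$ on $H$, which preserves $\Delta$.
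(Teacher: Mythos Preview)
Your proposal is correct and follows essentially the same route as the paper's proof. Both arguments rest on the fact that $\Zp$ is projective over $\Zp\Delta$ when $(|\Delta|,p)=1$; you make this explicit via the central idempotent $e$, while the paper phrases it as flatness of $\Zp(H/\Delta)$ over $\Zp H$ and invokes the base-change Lemma~\ref{basechange}(1) rather than collapsing the spectral sequence directly---but these are two packagings of the same observation. Likewise, for the dimension inequality the paper writes $\Ext^j_{\Zp G}(M,\Zp G)\otimes_{\Zp\Delta}\Zp \cong \Ext^j_{\Zp G/\Delta}(M_\Delta,\Zp G/\Delta)$, which is exactly your idempotent splitting $\Ext^j_{\Zp G}(eM,\Zp G)\cong \Ext^j_{\Zp(G/\Delta)}(M_\Delta,\Zp(G/\Delta))$ in different notation.
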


\begin{proof} First recall that $\Zp$ with the trivial $\Delta$-action is a projective right $\Zp\Delta$ module as $|\Delta|$ is a unit in $\Zp$. Since the induction functor $(-)\hat{\otimes}_{\Zp\Delta}\Zp H$ from profinite right $\Zp\Delta$-modules to profinite right $\Zp H$-modules is left-adjoint to the restriction functor, it sends projective modules to projective modules and so in particular $\Zp H/\Delta\cong \Zp \hat{\otimes}_{\Zp \Delta} \Zp H$ is a projective profinite right $\Zp H$-module and so flat as a right $\Zp H$-module. 

Using Lemma \ref{basechange}(1) we may conclude that $H_i(H,M)\cong H_i(H/\Delta,H_0(\Delta,M))$ for each $i\geqslant 0$ as $\Zp\Gamma$-modules and the first part follows.

By considering a finitely generated projective resolution of $M$ we can also show \[ \Ext^j_{\Zp G}(M,\Zp G)\otimes_{\Zp \Delta}\Zp\cong \Ext^j_{\Zp G/\Delta}(M_\Delta,\Zp G/\Delta) \] for each $j\geqslant 0$ and the second part follows too as $\dim G=\dim G/\Delta$.
\end{proof}

By applying this Lemma in the case $\Delta=\Delta^+$ is the maximal finite normal subgroup of $G$ and using Lemma \ref{group} we have reduced the calculation of Euler characteristics of finitely generated pseudo-null $\Zp G$-modules when $G$ is finite-by-nilpotent without elements of order $p$ to the case when $G$ is torsion-free nilpotent and pro-$p$. 

\subsection{The torsion-free nilpotent case}\label{final}

Suppose now that $G$ is a finitely generated nilpotent pro-$p$ group without torsion and we have a fixed decomposition $G\cong H\rtimes\Gamma$. 
 
\begin{lem} Suppose that $M$ is a $\Zp G$-module with well-defined Akashi series and there exists $z\in Z(G)\cap H$ such that $Z=\langle z\rangle$ is an isolated subgroup of $G$ acting trivially on $M$. Then $Ak_H(M)=1$. 
\end{lem}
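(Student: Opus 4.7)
The plan is to apply a Hochschild--Serre spectral sequence for the central extension $1\to Z\to H\to H/Z\to 1$ (equivalently, the base-change spectral sequence of Lemma \ref{basechange}(2) for the quotient map $\Zp H\twoheadrightarrow\Zp(H/Z)$) and deduce the triviality of $Ak_H(M)$ as an Euler characteristic identity in the Grothendieck group of finitely generated torsion $\Zp\Gamma$-modules. Since $Z$ is isolated in the torsion-free nilpotent pro-$p$ group $G$, we have $Z\cong\Zp$, the quotient $G/Z$ is again torsion-free nilpotent pro-$p$, and $G/Z\cong (H/Z)\rtimes\Gamma$; in particular $M$ descends to a finitely generated $\Zp(G/Z)$-module. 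As $Z$ acts trivially on $M$, Lemma \ref{homology} gives $H_0(Z,M)=H_1(Z,M)=M$ and $H_j(Z,M)=0$ for $j\geqslant 2$, so writing $A_i:=H_i(H/Z,M)$ the spectral sequence takes the form
\[
E^2_{i,j}=\begin{cases}A_i & j\in\{0,1\},\\ 0 & j\geqslant 2,\end{cases}\qquad\Longrightarrow\qquad H_{i+j}(H,M).
\]

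The only possibly non-zero differentials are $\delta_i:=d^2\colon A_i\to A_{i-2}$, so $E^3=E^\infty$ and the induced filtration on $H_n(H,M)$ produces a short exact sequence
\[
0\longrightarrow A_{n-1}/\im\delta_{n+1}\longrightarrow H_n(H,M)\longrightarrow \ker\delta_n\longrightarrow 0.
\]
The main obstacle is to show \emph{a priori} that each $A_i$ is itself $\Zp\Gamma$-torsion, since otherwise the characteristic elements $f_{A_i}$ are not defined and the Euler-characteristic argument cannot even be set up. Because $H_n(H,M)$ is $\Zp\Gamma$-torsion by hypothesis, both outer terms above are torsion; writing $r$ for $\Zp\Gamma$-rank, the exact sequences $0\to\ker\delta_n\to A_n\to\im\delta_n\to 0$ and $0\to\im\delta_{n+1}\to A_{n-1}\to A_{n-1}/\im\delta_{n+1}\to 0$ then yield $r(A_{n-1})=r(\im\delta_{n+1})\leqslant r(A_{n+1})$, so the sequences $r(A_0)\leqslant r(A_2)\leqslant\cdots$ and $r(A_1)\leqslant r(A_3)\leqslant\cdots$ are non-decreasing. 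Since $\Zp(H/Z)$ has finite global dimension, $A_i=0$ for large $i$, so both monotone sequences must stabilise at $0$, forcing every $A_i$ to be torsion.

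With the $A_i$ known to be finitely generated torsion $\Zp\Gamma$-modules, additivity of characteristic elements on the three short exact sequences above gives
\[
[H_n(H,M)]=[A_n]-[\im\delta_n]+[A_{n-1}]-[\im\delta_{n+1}]
\]
in the Grothendieck group of finitely generated torsion $\Zp\Gamma$-modules. Taking the alternating sum over $n$, each of the $[A_\bullet]$ and $[\im\delta_\bullet]$ contributions telescopes to $0$, yielding $\sum_n(-1)^n[H_n(H,M)]=0$. Translating back into characteristic elements in $Q(\Gamma)^\times/(\Zp\Gamma)^\times$, this reads $Ak_H(M)=\prod_n f_{H_n(H,M)}^{(-1)^n}=1$, as required.
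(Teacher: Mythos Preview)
Your proof is correct and follows the same overall strategy as the paper: apply the Hochschild--Serre spectral sequence for $Z\lhd H$, use that $Z\cong\Zp$ acts trivially so the $E^2$-page has only two identical rows $A_i=H_i(H/Z,M)$, and conclude by multiplicativity of characteristic elements. The paper packages the two-row spectral sequence as the long exact sequence
\[
\cdots\to H_{n+1}(H,M)\to A_{n+1}\to A_{n-1}\to H_n(H,M)\to\cdots
\]
rather than your filtration short exact sequences, but this is an equivalent bookkeeping device.

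The one genuine difference is how you establish that each $A_i$ is $\Zp\Gamma$-torsion. The paper observes that $H_0(H/Z,M)=H_0(H,M)$ is torsion and that $H/Z$ is again torsion-free nilpotent, so the earlier characterisation theorem (Theorem~\ref{Akdefd}) immediately gives that $Ak_{H/Z}(M)$ is well-defined, i.e.\ every $A_i$ is torsion. Your rank-monotonicity argument extracted from the spectral sequence is a valid self-contained alternative that avoids invoking that theorem; it is slightly longer but has the virtue of not depending on the localisation results of \S\ref{Akdefd}.
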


\begin{proof}
First notice that $H_0(H,M)=H_0(H/Z,M)$ is a torsion $\Zp\Gamma$-module, that $G/Z\cong H/Z\rtimes\Gamma$ and that $H/Z$ is also torsion-free and nilpotent so $Ak_{H/Z}(M)$ is also well-defined by Proposition \ref{Eulerdefd}. Now the proof is nearly identical to the proof of \cite[Corollary 12.2]{ArdWad2008} but we'll sketch it here for the sake of the reader.

By Lemma \ref{homology} $H_i(Z,M)$ vanishes for $i>1$ and is isomorphic to $M$ as a left $\Zp G/Z$-module for $i=0,1$ since $Z$ is central in $G$. Thus Lemma \ref{basechange}(2) describes a spectral sequence of $\Zp\Gamma$-modules with second page \[ E_{ij}=H_i(H/Z,H_j(Z,M))\] that is concentrated in rows $j=0,1$. By \cite[Exercise 5.2.2]{Wei1995}, for example, this yields a long exact sequence \[ \cdots\rightarrow H_{n+1}(H,M)\rightarrow H_{n+1}(H/Z,M)\rightarrow H_{n-1}(H/Z,M)\rightarrow H_n(H,M)\rightarrow\cdots \] of $\Zp\Gamma$-modules

The result now follows from the multiplicativity of characteristic elements. \end{proof}

\begin{thm} If $G\cong H\rtimes\Gamma$ is a torsion-free nilpotent $p$-adic Lie group, and $M$ is a pseudo-null $\Zp G$-module with well-defined Akashi series then $Ak_H(M)=1$.
\end{thm}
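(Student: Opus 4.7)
The plan is to proceed by induction on $\dim H$, using the preceding Lemma together with the Hochschild-Serre spectral sequence. The base case $\dim H = 0$ is immediate: $H = 1$ and the pseudo-null hypothesis forces $M$ to be finite, which is itself pseudo-null over $\Zp\Gamma$, so $Ak_H(M) = f_M = 1$ by the convention for pseudo-null modules.

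For the inductive step I exploit the nilpotency of $G$ to choose $z \in Z(G) \cap H$ whose closure $Z := \overline{\langle z \rangle}$ is isolated in $G$; then $G/Z$ is still torsion-free nilpotent, $G/Z \cong (H/Z) \rtimes \Gamma$, and $\dim H/Z < \dim H$. The ascending chain of $\Zp G$-submodules $K_n := \ker((z-1)^n \mid_M)$ stabilises at some $K_\infty = K_N$ by Noetherianity, and I consider the short exact sequence
\[ 0 \to K_\infty \to M \to M' \to 0 \]
with $M' := M/K_\infty$, on which $(z-1)$ acts injectively. The idea is to handle $K_\infty$ via the preceding Lemma after a trivialising filtration, and $M'$ via the Hochschild-Serre spectral sequence after a dimension drop that reduces matters to a strictly smaller group.

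The filtration $0 \subset K_1 \subset \cdots \subset K_N = K_\infty$ has each factor $K_n/K_{n-1}$ carrying trivial $z$-action (since $(z-1) K_n \subseteq K_{n-1}$), and well-defined Akashi series by the Corollary in section \ref{Akdefd}. The preceding Lemma therefore gives $Ak_H(K_n/K_{n-1}) = 1$, and multiplicativity yields $Ak_H(K_\infty) = 1$. For $M'$, the Hochschild-Serre spectral sequence for $1 \to Z \to H \to H/Z \to 1$ with coefficients in $M'$ has $H_0(Z, M') = C' := M'/(z-1)M'$ and $H_1(Z, M') = 0$, so it collapses to give $H_n(H, M') \cong H_n(H/Z, C')$ and hence $Ak_H(M') = Ak_{H/Z}(C')$. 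Since $(z-1)$ is a central regular element of $\Zp G$ acting regularly on $M'$, $d_G(C') = d_G(M') - 1 \leq \dim G - 2$; combined with the identity $d_{G/Z}(N) = d_G(N)$ for $\Zp G/Z$-modules $N$, this shows $C'$ is pseudo-null over $\Zp G/Z$, with its Akashi series over $\Zp G/Z$ well-defined by another application of the same two-row spectral sequence argument. The inductive hypothesis then gives $Ak_{H/Z}(C') = 1$, so $Ak_H(M') = 1$, and multiplicativity on $0 \to K_\infty \to M \to M' \to 0$ yields $Ak_H(M) = 1$.

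The critical idea is the decomposition of $M$ into the locally-nilpotent and regular parts of the $(z-1)$-action, which simultaneously opens the door to the preceding Lemma on $K_\infty$ and to the inductive reduction on $M'$. The main technical point is the dimension identity $d_{G/Z}(N) = d_G(N)$ for $\Zp G/Z$-modules $N$, which follows from the change-of-rings spectral sequence for $\Ext$ applied to the quotient $\Zp G \to \Zp G/Z$ by the regular central element $(z-1)$.
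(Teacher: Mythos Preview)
Your overall architecture matches the paper's: induction on $\dim H$ via an isolated central $Z\leqslant Z(G)\cap H$, the preceding Lemma to dispose of the part on which $Z$ acts trivially, and the Hochschild--Serre spectral sequence to pass to $G/Z$. The organisational difference is that the paper first makes a Noetherian reduction (replace $M$ by $M/N$ for $N$ maximal with $Ak_H(N)=1$) and then passes to a \emph{critical} submodule, so that $M^Z=0$ is forced and the quotient $M_Z$ automatically has strictly smaller canonical dimension by the very definition of criticality. You instead take the $(z-1)$--primary decomposition $0\to K_\infty\to M\to M'\to 0$, kill $K_\infty$ by a filtration argument, and then assert $d_G(C')=d_G(M')-1$ for $C'=M'/(z-1)M'$.

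That last assertion is where your argument is incomplete. The Rees identity you invoke indeed gives $d_{G/Z}(C')=d_G(C')$, and the long exact $\Ext$--sequence for $0\to M'\xrightarrow{z-1}M'\to C'\to 0$ gives $j_G(C')\geqslant j_G(M')$, hence $d_G(C')\leqslant d_G(M')$. But the \emph{strict} drop $j_G(C')\geqslant j_G(M')+1$ amounts to showing that $(z-1)$ acts injectively on $\Ext^{j_G(M')}_{\Zp G}(M',\Zp G)$, and nothing you have written forces this: the Auslander condition only tells you that the $(z-1)$--torsion submodule of that Ext group has grade $\geqslant j_G(M')$, which does not by itself make it vanish. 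In the borderline case $d_G(M')=\dim G-1$ this is exactly the inequality you need, so the induction does not close as written. The statement is in fact true for these Iwasawa algebras, but it requires a genuine argument (e.g.\ via good filtrations and the commutative associated graded, or an Auslander--Cohen--Macaulay property); it is precisely to avoid this that the paper introduces the reduction to a critical module, where $d_G(M_Z)<d_G(M)$ is immediate because $M_Z$ is a proper quotient. Your proof is easily repaired by inserting the same criticality reduction for $M'$ after you have split off $K_\infty$.
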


\begin{proof} Since $M$ is Noetherian and $Ak_H(0)=1$ we may define $N$ to be a maximal submodule of $M$ such that $Ak_H(N)=1$. Using Lemma \ref{Akashi}(1) and Corollary \ref{Akdefd} we see that every non-zero submodule $L$ of $M/N$ satisfies $Ak_H(L)\neq 1$. Thus after replacing $M$ by $M/N$ it suffices to prove that $M$ must have a non-zero submodule $L$ with $Ak_H(L)=1$. 

Next recall (\cite[4.5]{Lev1992}) that whenever we have a finitely generated module $M$ over a Auslander-Gorenstein ring it has a critical submodule; ie a submodule $N$ with the property that every proper quotient has strictly smaller canonical dimension. Using this fact and the remarks in the first paragraph we may assume that our module $M$ is critical.  

Now pick an isolated one-dimensional subgroup $Z$ of $Z(G)\cap H$, then $M^Z$ is a $\Zp G$-submodule of $M$. By the Lemma above $Ak_H(M^Z)=1$, and so we may assume (Lemma \ref{homology}(3)) that $H^1(Z,M)=M^Z=0$. 

Now the homology spectral sequence \[ E^2_{ij}=H_i(H/Z,H_j(Z,M))\Longrightarrow H_{i+j}(H,M) \] has only one non-trivial row on the second page and so $H_i(H,M)=H_i(H/Z,M_Z)$ as $\Zp\Gamma$-modules. Thus $Ak_H(M)=Ak_{H/Z}(M_Z)$. 

But $d_G(M_Z)<d_G(M)$, since $M$ is critical. Also $d_G(M_Z)=d_{G/Z}(M_Z)$ by the Rees Lemma (\cite[Theorem 9.37]{Rotman}, for example). It follows that $M_Z$ is a pseudonull $\Zp G/Z$-module with well-defined Akashi series and by induction on $\dim(G)$ we have $Ak_{H/Z}(M_Z)=1$ --- the result is trivial when $H=1$ by the definition of characteristic element.
\end{proof}

\subsection{Partial converses}

We now discuss some results that put restrictions on the set of groups for which the second conclusion of the main theorem holds.

\begin{defn} We say that \emph{pseudo-nulls are $\chi$-trivial for $G$} if whenever a finitely generated pseudo-null left  $\Zp G$-module $M$ has well-defined Euler characteristic we have $\chi(G,M)=1$.
\end{defn}

\begin{thm} Suppose that $G$ is a compact $p$-adic Lie group without $p$-torsion and pseudo-nulls are $\chi$-trivial for $G$. Then
\begin{enumerate}
\item pseudo-nulls are $\chi$-trivial for every closed subgroup $H$ of $G$;
\item $G$ is $p$-nilpotent, ie. $G/\Delta^+(G)$ is pro-$p$;
\item $\dim C_G(g)>1$ for all $g\in G$ or $G$ has dimension $1$;
\item if $G$ is isomorphic to a semidirect product $\Zp^d\rtimes\Zp$ then $G$ is nilpotent;
%\item if $G$ is soluble then $G$ is nilpotent. 
\item if $G$ is split-reductive then it is abelian.
\end{enumerate}
\end{thm}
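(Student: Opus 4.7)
My overall strategy is that part (1) is the linchpin: it transfers the hypothesis from $G$ to closed subgroups, and parts (2)--(5) then all proceed by contrapositive, exhibiting either a closed subgroup of $G$ where (3) must fail or an explicit pseudo-null $\Zp G$-module with non-trivial Euler characteristic.

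For part (1), given a closed $H \leq G$ and a finitely generated pseudo-null $\Zp H$-module $M$ with well-defined Euler characteristic, I would form the induced module $\widetilde M := \Zp G \otimes_{\Zp H} M$, finitely generated over $\Zp G$. Since $\Zp G$ is flat as a right $\Zp H$-module, Shapiro's lemma identifies $H_i(G, \widetilde M) \cong H_i(H, M)$, so $\chi(G, \widetilde M) = \chi(H, M)$; the same flatness applied to a finitely generated projective resolution of $M$ yields $\Ext^*_{\Zp G}(\widetilde M, \Zp G) \cong \Ext^*_{\Zp H}(M, \Zp G)$ and hence the codimension shift $d_G(\widetilde M) = d_H(M) + (\dim G - \dim H) \leq \dim G - 1$, keeping $\widetilde M$ pseudo-null over $\Zp G$. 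The hypothesis then forces $\chi(H, M) = \chi(G, \widetilde M) = 1$.

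Parts (3) and (5) apply (1) most directly. For (3), if $\dim G \geq 2$ and some $g \in G$ satisfies $\dim C_G(g) \leq 1$, there are two cases: if $g$ has infinite order, then after passing to a torsion-free open subgroup $G_0$ in which a power of $g$ is in the image of $\exp$, $\mathfrak{g}_{\Qp}$ has an element with one-dimensional centraliser, so Totaro's theorem produces a finitely generated $\Zp$-module with $\chi(G_0, \cdot) \neq 1$; if $g$ is $p$-regular, the Ardakov--Wadsley Brauer-character description of $\chi$ on $p$-torsion modules produces a finitely generated $p$-torsion module with $\chi(G_0, \cdot) \neq 1$. In either case the module has $d_{G_0} \leq 1 < \dim G_0$, hence is pseudo-null, contradicting (1). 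For (5), a non-abelian split reductive $G$ contains, via any root, a rank-one split reductive closed subgroup $L$ (a copy of $\mathrm{SL}_2$, $\mathrm{PGL}_2$, or $\mathrm{GL}_2$); inside a torsion-free open subgroup of $L$, a regular semisimple element has its centraliser equal to a one-dimensional maximal torus, so (3) fails for $L$, and (1) propagates the contradiction back to $G$.

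For part (4), if $G = \Zp^d \rtimes \Zp$ is non-nilpotent then $\gamma$ acts on $H \otimes \Qp$ with some eigenvalue $\alpha \neq 1$. I would analyse the rank-one module $M_\lambda$ on which $H$ acts trivially and $\gamma$ acts by a scalar $\lambda \in \Zp^\times$; since $H$ acts trivially, $H_j(H, M_\lambda) \cong \Lambda^j \Zp^d \otimes M_\lambda$ carries $\gamma$-eigenvalues $\lambda \prod_{i \in S} \alpha_i$ as $S$ ranges over $j$-subsets of $\{1, \ldots, d\}$, so
\[
Ak_H(M_\lambda) = \prod_{S \subseteq \{1,\ldots,d\}} \left(\gamma - \lambda \prod_{i \in S} \alpha_i\right)^{(-1)^{|S|}}.
\]
For $d = 1$ (or whenever the $\alpha_i$ avoid multiplicative cancellations), choosing $\lambda = 1 + p^N$ with $N$ large compared to $\max_i v_p(\alpha_i - 1)$ forces $v_p(\epsilon(Ak_H(M_\lambda))) \neq 0$, hence $\chi(G, M_\lambda) \neq 1$; since $M_\lambda$ is $\Zp$-rank one it is pseudo-null, contradicting the hypothesis. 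In general the product can telescope when some $\alpha_i$ equal $1$, so one may need to replace $M_\lambda$ by a higher-rank module with compatible non-trivial $H$-action (built from the non-unipotent eigenvectors of $\gamma$) in order to produce a non-trivial Akashi series. Finally for part (2), if $G$ is not $p$-nilpotent pick $g \in G \setminus \Delta^+(G)$ of finite order $q$ coprime to $p$; then $\mathrm{Ad}(g)$ acts non-trivially on $\mathfrak{g}$, and from its semisimplicity I would extract a $g$-stable closed one-parameter subgroup $X \cong \Zp$ of $G$ on which $g$ acts by a unit $\alpha \in \Zp^\times$ with $\bar\alpha \neq 1 \in \Fp^\times$. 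A Hochschild--Serre computation for $H := X \rtimes \langle g \rangle$ with trivial $\Fp$-coefficients (using that $|\langle g \rangle|$ is coprime to $p$ and $g$ acts by $\bar\alpha \neq 1$ on $H_1(X, \Fp) = \Fp$) yields $H_0(H, \Fp) = \Fp$ and $H_i(H, \Fp) = 0$ for $i \geq 1$, so $\chi(H, \Fp) = p \neq 1$; this contradicts (1) since $\Fp$ is finite, hence pseudo-null over $\Zp H$. The hardest steps are the delicate constructions in (2) and (4): in (2), producing a $\Zp$-rational $g$-stable one-parameter subgroup when the eigenvalues of $\mathrm{Ad}(g)$ on $\mathfrak{g}$ are non-rational requires working with a $\Qp$-isotypic component, choosing a $g$-stable $\Zp$-lattice in it which exponentiates to an abelian subgroup, and possibly replacing $g$ by a power diagonalising its action over $\Zp$; in (4), finding a module producing a non-trivial Akashi series when only some eigenvalues of $\gamma$ differ from $1$.
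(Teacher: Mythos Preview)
Your argument for part (1) matches the paper's. For (3) and (5) you take close variants of the paper's route: the paper likewise reduces (3) to Totaro's theorem via part (1), and for (5) it reduces to (4) rather than directly to (3), but either chain works once (1) is in hand.

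The real divergence is part (4). You attempt a direct Akashi-series computation for rank-one modules $M_\lambda$ and correctly observe that the alternating product
\[
\prod_{S\subseteq\{1,\dots,d\}}\Bigl(\gamma-\lambda\prod_{i\in S}\alpha_i\Bigr)^{(-1)^{|S|}}
\]
telescopes to $1$ as soon as any eigenvalue $\alpha_i$ equals $1$, leaving the general case unfinished. The paper avoids this computation altogether by reducing (4) to (3): factor the minimal polynomial of the generator $g$ of the $\Zp$-complement acting on $H\cong\Zp^d$ as $p(t)=(t-1)^r q(t)$ with $q(t)$ coprime to $t-1$, and pass to the closed subgroup $K=\langle\ker q(g),\,g\rangle$. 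Since $t-1$ and $q(t)$ are coprime over $\Qp$, no non-trivial element of $\ker q(g)$ is centralised by $g$, so $\dim C_K(g)=1$; as $q$ is non-constant (this is exactly the non-nilpotence of $G$), $\dim K\geq 2$, and (3) applied to $K$ via (1) gives the contradiction. This subgroup trick is precisely the idea you are missing: it excises the troublesome eigenvalue-$1$ part without needing a rational eigenvector or any explicit module.

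For part (2) the paper simply cites existing results of Ardakov--Wadsley, which already supply a pseudo-null $p$-torsion module with non-trivial Euler characteristic whenever $G$ is not $p$-nilpotent. Your hand-built $X\rtimes\langle g\rangle$ argument is morally the one-dimensional case of the same phenomenon, and the rationality obstacle you flag (that $\mathrm{Ad}(g)$ may have no eigenvector in $\mathfrak{g}$ over $\Zp$) is again handled by taking $X$ to be an entire $g$-stable block rather than a single line---the same move that cleans up (4).
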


\begin{proof}
For part (1), suppose $H$ is a closed subgroup of $G$ and $N$ is any finitely generated left $\Zp H$-module. By Shapiro's Lemma (see \cite[Theorem 6.10.9]{RibZal2000}, for example)  we have $H_i(G,\Zp G\otimes_{\Zp H} N)\cong H_i(H,N)$ for each $i\geqslant 0$, so it suffices to prove that if $N$ is pseudo-null as a $\Zp H$-module then $\Zp G\otimes_{\Zp H}N$ is pseudo-null as a $\Zp G$-module.

By inducing a finitely generated projective resolution of $N$ as a $\Zp H$-module to a projective resolution of $\Zp G\otimes_{\Zp H}N$ as a $\Zp G$-module we see that \[ \Ext^j_{\Zp H}(N,\Zp H)\otimes_{\Zp H}\Zp G\cong \Ext^j_{\Zp G}(\Zp G\otimes_{\Zp H}N,\Zp G)\] for each $j\geqslant 0$ and we are done.

Part (2) follows from \cite[Lemma 7.4 \& Theorem 11.5]{ArdWad2006}. In particular there is a pseudo-null $p$-torsion $\Zp G$-module $M$ with $\chi(G,M)\neq 1$.

For part (3), since all open subgroup are closed, part (1) tells us that it suffices to prove the result for an open subgroup of $G$. The result now follows immediately from Totaro's theorem quoted in the introduction.

For part (4), suppose $G\cong \Zp^d\rtimes \Zp$, let $H$ be the closed normal subgroup of $G$ isomorphic to $\Zp^d$ and let $g$ generate a complement to $H$ in $G$. Now the action of $g$ on $H$ has a minimal polynomial $p(t)$, say and we may write $p(t)=(t-1)^rq(t)$ with $q(t)$ and $(t-1)$ relatively prime. If $q$ is constant then $G$ is nilpotent so we may assume it is not. Let $K=\langle\ker q(g),g\rangle$, a closed subgroup $G$ with $\dim C_K(g)=1$. By part (3) pseudo-nulls are not $\chi$-trivial for $K$ and the result follows by part (2). 
 
To prove part (5) we first notice that parts (1) and (4) imply that it suffices to show that if $G$ is not abelian then it must have a non-nilpotent closed subgroup isomorphic to a semi-direct product $\Zp^d\rtimes \Zp$. If the associated Lie algebra is split-reductive but not abelian then $G$ has a non-abelian subgroup isomorphic to a semi-direct product $\Zp\rtimes\Zp$. \end{proof} 

\begin{rmks} \hfill

\begin{enumerate}
\item It is easy to see that the proof of part (5) applies to a much wider class of groups than non-abelian split-reductive groups. 

\item However, one class of examples that such arguments don't easily apply to includes groups that are four-dimensional and soluble and are isomorphic to a semi-direct product $H_3\rtimes\Zp$ where $H_3$ is a $3$-dimensional Heisenberg pro-$p$ group and the complementary copy of $\Zp$ is generated by an element that induces a automorphism of infinite order on $H_3$ that acts trivially on the centre of $H_3$ and fixes no other one-dimensional subgroup.

\end{enumerate}
\end{rmks}

\bibliographystyle{plain}
\bibliography{../../biblio/references}

\def\cftil#1{\ifmmode\setbox7\hbox{$\accent"5E#1$}\else
  \setbox7\hbox{\accent"5E#1}\penalty 10000\relax\fi\raise 1\ht7
  \hbox{\lower1.15ex\hbox to 1\wd7{\hss\accent"7E\hss}}\penalty 10000
  \hskip-1\wd7\penalty 10000\box7}
\begin{thebibliography}{10}

\bibitem{ArdBro2006}
K.~Ardakov and K.~A. Brown.
\newblock Ring-theoretic properties of {I}wasawa algebras: a survey.
\newblock {\em Doc. Math.}, (Extra Vol.):7--33 (electronic), 2006.

\bibitem{Ard2006}
Konstantin Ardakov.
\newblock Localisation at augmentation ideals in {I}wasawa algebras.
\newblock {\em Glasg. Math. J.}, 48(2):251--267, 2006.

\bibitem{ArdBro2007}
Konstantin Ardakov and Kenneth~A. Brown.
\newblock Primeness, semiprimeness and localisation in {I}wasawa algebras.
\newblock {\em Trans. Amer. Math. Soc.}, 359(4):1499--1515 (electronic), 2007.

\bibitem{ArdWad2006}
Konstantin Ardakov and Simon Wadsley.
\newblock Characteristic elements for {$p$}-torsion {I}wasawa modules.
\newblock {\em J. Algebraic Geom.}, 15(2):339--377, 2006.

\bibitem{ArdWad2008}
Konstantin Ardakov and Simon Wadsley.
\newblock {$K\sb 0$} and the dimension filtration for {$p$}-torsion {I}wasawa
  modules.
\newblock {\em Proc. Lond. Math. Soc. (3)}, 97(1):31--59, 2008.

\bibitem{Coa1999}
John Coates.
\newblock Fragments of the {${\rm GL}\sb 2$} {I}wasawa theory of elliptic
  curves without complex multiplication.
\newblock In {\em Arithmetic theory of elliptic curves ({C}etraro, 1997)},
  volume 1716 of {\em Lecture Notes in Math.}, pages 1--50. Springer, Berlin,
  1999.

\bibitem{CFKSV}
John Coates, Takako Fukaya, Kazuya Kato, Ramdorai Sujatha, and Otmar Venjakob.
\newblock The {$\rm GL\sb 2$} main conjecture for elliptic curves without
  complex multiplication.
\newblock {\em Publ. Math. Inst. Hautes \'Etudes Sci.}, (101):163--208, 2005.

\bibitem{CoaSchSuj2003/2}
John Coates, Peter Schneider, and Ramdorai Sujatha.
\newblock Links between cyclotomic and {${\rm GL}\sb 2$} {I}wasawa theory.
\newblock {\em Doc. Math.}, (Extra Vol.):187--215 (electronic), 2003.
\newblock Kazuya Kato's fiftieth birthday.

\bibitem{CoaSuj1999}
John Coates and Ramdorai Sujatha.
\newblock Euler-{P}oincar\'e characteristics of abelian varieties.
\newblock {\em C. R. Acad. Sci. Paris S\'er. I Math.}, 329(4):309--313, 1999.

\bibitem{Lev1992}
Thierry Levasseur.
\newblock Some properties of noncommutative regular graded rings.
\newblock {\em Glasgow Math. J.}, 34(3):277--300, 1992.

\bibitem{RibZal2000}
Luis Ribes and Pavel Zalesskii.
\newblock {\em Profinite groups}, volume~40 of {\em Ergebnisse der Mathematik
  und ihrer Grenzgebiete. 3. Folge. A Series of Modern Surveys in Mathematics}.
\newblock Springer-Verlag, Berlin, 2000.

\bibitem{Rotman}
Joseph~J. Rotman.
\newblock {\em An introduction to homological algebra}, volume~85 of {\em Pure
  and Applied Mathematics}.
\newblock Academic Press Inc. [Harcourt Brace Jovanovich Publishers], New York,
  1979.

\bibitem{Ser1998}
Jean-Pierre Serre.
\newblock La distribution d'{E}uler-{P}oincar\'e d'un groupe profini.
\newblock In {\em Galois representations in arithmetic algebraic geometry
  ({D}urham, 1996)}, volume 254 of {\em London Math. Soc. Lecture Note Ser.},
  pages 461--493. Cambridge Univ. Press, Cambridge, 1998.

\bibitem{Tot1999}
Burt Totaro.
\newblock Euler characteristics for {$p$}-adic {L}ie groups.
\newblock {\em Inst. Hautes \'Etudes Sci. Publ. Math.}, (90):169--225 (2001),
  1999.

\bibitem{Wei1995}
Charles~A. Weibel.
\newblock {\em An introduction to homological algebra}, volume~38 of {\em
  Cambridge Studies in Advanced Mathematics}.
\newblock Cambridge University Press, Cambridge, 1994.

\end{thebibliography}

\end{document}